 \tikzset{mynode/.style={draw,circle,inner sep=1pt,outer sep=0pt}}
\newtheorem{teo}{Theorem}[section]
\newtheorem{cor}[teo]{Corollary}
\newtheorem{lem}[teo]{Lemma}
\newtheorem{prop}[teo]{Proposition}
\newtheorem{example}[teo]{Example}
\newtheorem{remark}[teo]{Remark}
\newenvironment{Proof}{{\sc Proof.}\ }{\hfill$\square$\vspace{0.4truecm}}
\newcommand{\id}{\mbox{\rm id}}
\newcommand{\dom}{\mbox{\rm dom}\,}
\newcommand{\cod}{\mbox{\rm cod}\,}
\DeclareMathOperator{\op}{op}
\DeclareMathOperator{\Aut}{Aut}
\DeclareMathOperator{\End}{End}
\DeclareMathOperator{\ad}{ad}
\DeclareMathOperator{\Der}{Der}
\newcommand{\Gp}{\mathsf{Gp}}
\newcommand{\Set}{\mathsf{Set}}
\newcommand{\Cat}{\mathsf{Cat}}
\newcommand{\Ab}{\mathsf{Ab}}
\newcommand{\lMod}{\mbox{\rm -}{\mathsf{Mod}}}
\newcommand{\Mod}{\operatorname{Mod-\!}}
\newcommand{\Cal}[1]{{\mathcal #1}}
\newcommand{\DiGp}{\mathsf{DiGp}}
\newcommand{\SKB}{\mathsf{SKB}}
\newcommand{\im}{\mbox{\rm im}}
\begin{document}

\title{Semidirect products in Universal Algebra}

\author{Alberto Facchini}
\address[Alberto Facchini]{Dipartimento di Matematica ``Tullio Levi-Civita'', Universit\`a di Padova, Via Trieste 63, 35121 Padova, Italy}
\thanks{\thanks{The first author was partially supported by Ministero dell'Universit\`a e della Ricerca (Progetto di ricerca di rilevante interesse nazionale ``Categories, Algebras: Ring-Theoretical and Homological Approaches (CARTHA)''), and the Department of Mathematics ``Tullio Levi-Civita'' of the University of Padua (Research programme DOR1828909 ``Anelli e categorie di moduli'').}
 }
\email{facchini@math.unipd.it}

\author{David Stanovsk\'y}
\address[David Stanovsk\'y]{Department of Algebra, Faculty of Mathematics and Physics, Charles University, Sokolovsk\'a 83, 18675 Prague 8, Czech Republic}
\thanks{}
\email{stanovsk@karlin.mff.cuni.cz}

\subjclass[2010]{}

\date{}

\maketitle

\begin{abstract} First of all, we recall the well known notion of semidirect product both for classical algebraic structures (like groups and rings) and for more recent ones (digroups, left skew braces, heaps, trusses). Then we analyse the concept of semidirect product for an arbitrary algebra $A$ in a variety $\Cal V$ of type~$\Cal F$. An inner semidirect-product  decomposition $A=B \ltimes\omega$ of $A$ consists of a subalgebra $B$ of $A$ and a congruence $\omega$ on $A$  such that $B$ is a set of representatives of the congruence classes of $A$ modulo $\omega$. An outer semidirect product is the restriction to $B$ of a functor from a suitable category $\Cal C_B$ containing $B$, called the enveloping category of $B$, to the category $\Set_*$ of pointed sets.
 \end{abstract}

\section{Introduction}

The standard construction of semidirect product $K\rtimes Y$ of two groups $K$ and $Y$ has been extended in a number of directions in Category Theory. One possibility is that it can be viewed as the lax 2-colimit of the diagram $K\colon Y\to\Cat$ (see \cite {Gray, SE}). As a second point of view, it can be seen as a Grothendick construction \cite{MLM, Th}. In this paper we present a different point of view, a new one, from the point of view of Universal Algebra. We begin (Section~\ref{2}) by presenting several examples, both the classical ones (groups, Lie algebras, etc.) and some more recent ones (digroups, skew braces, heaps, trusses). This is in order to allow the reader to understand our subsequent definitions. Then we pass to consider {\em inner} semidirect products of algebras. Let $A$ be an algebra in a variety $\Cal V$ of type~$\Cal F$. Inner semidirect-product decompositions of $A$ consist of a subalgebra $B$ of $A$ and a congruence $\omega$ on $A$. One has a semidirect-product decomposition $A=B \ltimes\omega$ when the subalgebra $B$ is a set of representatives of the congruence classes of $A$ modulo $\omega$. There is a one-to-one correspondence between semidirect-product decompositions of $A$ and idempotent endomorphisms of $A$ (Corollary~\ref{11}). In the study of semidirect-product decompositions of $A$, one immediately sees that an important role is played by the category $\Set_*$ of pointed sets (Remark~\ref{ru}). Another important role is played by totally idempotent elements of $B$, that is, the elements $b$ of $B$ for which the singletons $\{b\}$ are subalgebras of $B$. For instance, in the variety of lattices all elements are totally idempotent, in the variety of semigroups the totally idempotent elements are the idempotent ones, in the varieties of monoids and groups the unique totally idempotent element is the identity. We then pass to the study of {\em outer} semidirect products. Given an algebra $B$ in a variety $\Cal V$ of type $\Cal F$, an outer semidirect product of $B$ assigns a pointed set $(A_b,a_b)$ to each element $b\in B$, and assigns to each $n$-ary function symbol $f\in\Cal F$ and each $n$-tuple $(b_1,\dots, b_n)\in B^n$ a morphism
$f_{(b_1,\dots, b_n)}\colon(A_{b_1},a_{b_1})\times\dots\times(A_{b_n},a_{b_n})\to (A_{f(b_1,\dots, b_n)},a_{f(b_1,\dots, b_n)})$ 
in $\Set_*$. On the disjoint union $A=\bigcup_{b\in B} (A_b\times\{b\})$, there are natural $n$-ary operations defined by $$f((a_{b_1},b_1),\dots, (a_{b_n},b_n))=(f_{(b_1,\dots, b_n)}(a_{b_1},\dots,a_{b_n}), f(b_1,\dots, b_n))$$ for each $n$-ary function symbol $f\in \Cal F$, so that $A$ becomes an algebra of type $\Cal F$, and we require this algebra $A$ to be in the variety $\Cal V$.
Every such semidirect product $B\to\Set_*$ can be extended to a functor $\Cal C_B\to\Set_*$ from a suitable {\em enveloping category} $\Cal C_B$ of $B$ to the category 
$\Set_*$ of pointed sets (see~\ref{functor}).

\section{Examples and basic notions}\label{2}

\subsection{Groups}\label{2.1} The following is well known to all of us, but we repeat it here to fix the basic notions and notations. For a group $G$, a normal subgroup $K$ of $G$ and a subgroup $Y\le G$, the fact that $G$ is the {\em (inner) semidirect product} of $K$ and $Y$ can be equivalently stated saying that:

\begin{enumerate}
    \item[(a)] $G=KY$ and $K\cap Y=\{1_G\}$.
    \item[(b)] For every $g\in G$, there is a unique pair $(k,y)\in K\times Y$ such that $g = ky$.
    \item[(c)] For every $g\in G$, there is a unique pair $(k,y)\in K\times Y$ such that $g = yk$.
    \item[(d)] There is an idempotent endomorphism of $G$ with kernel $K$ and image $Y$.
    \item[(e)]   There is a homomorphism of $G$ onto $Y$ that is the identity on $Y$ and has kernel $K$.
    \item[(f)]   The mapping $Y\to G/K$, $y\mapsto yK$, is an isomorphism. 
\end{enumerate}
     
     Given any such semidirect-product decomposition $G=KY$ with $K\cap Y=\{1_G\}$, there is an {\em action} of $Y$ on $K$, that is, a group homomorphism $\phi\colon Y\to\Aut(K)$, defined by $\phi(y)(k)=yky^{-1}$ for every $y\in Y$, $k\in K$. 

        \medskip
        
Conversely, given any two groups $K$ and $Y$ and any group homomorphism $\phi\colon Y\to\Aut(K)$, it is possible to define a group structure on the cartesian product $K\times Y:=\{\,(k,y)\mid k\in K,\ y\in Y\,\}$ defining the operation by $$(k,y)(k',y')=(k\phi_y(k'), yy').$$ This group is the {\em outer semidirect product} of $K$ and $Y$, denoted by $K\rtimes_{\phi} Y$.

  \medskip

Any inner semidirect product $G=KY$ is isomorphic to the outer semidirect product $K\rtimes_{\phi} Y$ with $\phi$ the action of $Y$ on $K$ defined by conjugation: $$\phi(y)(k)=yky^{-1}$$ for every $y\in Y$, $k\in K$. 

\medskip

For a fixed group $G$, there is a bijection between the set $E$ of all idempotent endomorphisms of $G$ 
$$E:=\{\, e\mid e\colon G\to G\ \mbox{an endomorphism of $G$, and } e^2=e\,\}$$ and the set $A$ of all pairs $(K,Y)$, where $K$ is a normal subgroup of $G$, $Y$ is a subgroup of $G$, and $G$ is the inner semidirect product $ K\rtimes Y$: $$A:=\{\,(K,Y)\mid K\trianglelefteq G,\ Y\le G, \ G=KY, \ K\cap Y=\{1_G\}\,\}.$$  In this bijection, an idempotent endomorphism $e$ of $G$ corresponds to the pair $(\ker(e), \im(e))$.  

\medskip

For an additive {\em abelian} group $G$, idempotent endomorphisms $e\colon G\to G$ of $G$ correspond bijectively to the pairs $(K,Y)$, where $K=\ker(e)$, $Y=\im(e)$ and $G=K\oplus Y$ (direct-sum decomposition). This also holds for vector spaces and right modules $G$ over a ring: there is a one-to-one correspondence between the set of all idempotent endomorphisms of $G$ and the set of all pairs $(K,Y)$ of substructures of $G$ for which $G=K\oplus Y$.

\subsection{Lie algebras}\label{2.2}

For Lie algebras, semidirect products are sometimes called ``inessential extensions'' \cite[Chapitre Premier, \S 1, N.~7, D\'efinition 6 and N.~8]{Bou}. Inner semidirect-product decompositions of a Lie algebra $L$ over a commutative ring $k$ with identity are exactly the direct-sum decomposition $L=K\oplus Y$ of $L$ as a $k$-module, where $K$ is an ideal of $L$ and $Y$ is a Lie subalgebra of $L$. Such decompositions of $L$ bijectively correspond to idempotent endomorphisms of $L$. 

Given any such semidirect-product decomposition $L=K\oplus Y$ of $L$, the adjunction $\ad_L\colon L\to \Der(L)$, defined by $\ad_L(x)(y)=[x,y]$, induces by restriction  an {\em action} of $Y$ on $K$, that is, a Lie algebra homomorphism $\phi\colon Y\to\Der(K)$, defined by $\phi(y)(k)=[y,k]$ for every $y\in Y$, $k\in K$, into the Lie algebra $\Der(K)$ of derivations of $K$.  

Conversely, given any two Lie $k$-algebras $K$ and $Y$ and any Lie algebra morphism $\phi\colon Y\to\Der(K)$, it is possible to define a Lie algebra structure on the $k$-module $K\oplus Y$ (direct sum of the two $k$-modules $K$ and $L$),  defining the Lie bracket as $[(k,y),(k',y')]=([k,k']+\phi_y(k')-\phi_{y'}(k), [y,y'])$ \cite[Chapitre premier, \S 1, N.~8]{Bou}. This Lie algebra is  the outer semidirect product of $K$ and $Y$.

\subsection{Rings}\label{2.3}

For an associative ring $R$ (not-necessarily with an identity), idempotent endomorphisms of $R$ are in a one-to-one correspondence with the pairs $(K,S)$, where $K$ is an ideal of $R$, $S$ is a subring of $R$, and $R= K\oplus S$ as an abelian additive group. Now the natural $R$-$R$-bimodule structure $_RR_R$ on the ring $R$ induces an $S$-$K$-bimodule structure $_SK_K$ and a $K$-$S$-bimodule structure $_KK_S$ on $K$. Correspondingly, we have a ring homomorphism $\lambda\colon S\to\End_{\Mod K}(K_K)$  and a ring antihomomorphism $\rho\colon S\to\End_{K\lMod}(_KK)$ such that $\lambda(s)\circ\rho(t)=\rho(t)\circ\lambda(s)$ and $\rho(s)(x)y=x\lambda(s)(y)$  for all $s,t\in S$ and $x,y\in K$ (these conditions immediately follow from the associativity of multiplication in $R$).

Conversely, given two associative rings $K$ and $S$, a ring homomorphism $$\lambda\colon S\to\End_{\Mod K}(K_K)$$ and a ring antihomomorphism $$\rho\colon S\to\End_{K\lMod}(_KK)$$ such that $\lambda(s)\circ\rho(t)=\rho(t)\circ\lambda(s)$ and $\rho(s)(x)y=x\lambda(s)(y)$  for every $s,t\in S$ and every $x,y\in K$, it is possible to define a multiplication on the abelian group $K\oplus S$ setting $(k,s)(k',s')=(kk'+\lambda(s)(k')+\rho(s')(k), ss')$. 

\medskip

Let us check associativity of multiplication in $K\oplus S$. We have
\[ \begin{array}{l}((k,s)(k',s'))(k'',s'')=(kk'+\lambda(s)(k')+\rho(s')(k), ss')(k'',s'')= \\ \quad =((kk')k''+\lambda(s)(k')k''+\rho(s')(k)k''+\lambda(ss')(k'')+ \\ \quad\qquad\qquad +\rho(s'')(kk'+\lambda(s)(k')+\rho(s')(k)), (ss')s'')\end{array}\] and
\[ \begin{array}{l}(k,s)((k',s')(k'',s''))=(k,s)(k'k''+\lambda(s')(k'')+\rho(s'')(k'), s's'')= \\ \quad =(k(k'k''+\lambda(s')(k'')+\rho(s'')(k'))+\lambda(s)(k'k''+\lambda(s')(k'')+ \\ \quad\qquad\qquad +\rho(s'')(k'))+\rho(s's'')(k),s(s's'').\end{array}\]
Now $$(kk')k''=k(k'k'')$$ because $K$ is an associative ring,
$$\lambda(s)(k')k''=\lambda(s)(k'k'')$$ because $\lambda\colon S\to\End_{\Mod K}(K_K)$,
$$\rho(s')(k)k''=k\lambda(s')(k''),$$
$$\lambda(ss')(k'')=\lambda(s)\lambda(s')(k'')$$ because $\lambda$ is a ring morphism,
$$\rho(s'')(kk')=k\rho(s'')(k')$$ because $\rho\colon S\to\End_{K\lMod}(_KK)$,
$$\rho(s'')\lambda(s)(k')=\lambda(s)\rho(s'')(k')$$ because $\lambda(s)\circ\rho(s'')=\rho(s'')\circ\lambda(s)$,
$$\rho(s'')\rho(s')(k)=\rho(s's'')(k)$$ because $\rho$ is a ring antihomomorphism, and
$$(ss')s''=s(s's'')$$ because $S$ is an associative ring.

\medskip
  
Notice that now we need {\em two} ``actions'' $\lambda$ and $\rho$, a left one and a right one, while for groups and Lie algebras one action was sufficient. This is due to the fact that groups and Lie algebras are isomorphic to their opposite (the antiisomorphisms are $g\mapsto g^{-1}$ for groups and $x\mapsto -x$ for Lie algebras). This does not occur for associative algebras.

  \medskip

  What we have seen in this subsection about associative rings can be easily extended to the case of associative $C$-algebras, where $C$ is any commutative ring with identity.
  
  \subsection{Not-necessarily associative algebras}
  
Let $C$ be a commutative ring with identity $1$. With the term {\em $C$-algebra} we now mean a $C$-module $A$ with a further $C$-bilinear operation $$\cdot\colon A\times A\to A,\qquad (r,r')\mapsto r\cdot r'=rr'$$ (see \cite{Bou} and \cite[Section~2]{meltem}). Thus $C$-algebras generalize at the same time the Lie algebras we have considered in \ref{2.2} and the associative rings considered in \ref{2.3}. For a $C$-algebra  $A$, idempotent endomorphisms of $A$ are in a one-to-one correspondence with the pairs $(K,B)$, where $K$ is an ideal of $A$, $B$ is a sub-$C$-algebra of $A$, and $A= K\oplus B$ as a $C$-module. Then we have two $C$-module morphisms $\lambda,\rho\colon B\to\End_{\Mod C}(K)$.

Conversely, given two $C$-algebras $K$ and $S$, and two $C$-module morphisms $$\lambda,\rho\colon B\to\End_{\Mod C}(K),$$ it is possible to define a multiplication on the $C$-module $K\oplus B$ setting $(k,b)(k',b')=(kk'+\lambda(b)(k')+\rho(b')(k), bb')$ for every $k,k'\in K$ and every $b,b'\in B$. 

\medskip

For applications of idempotent endomorphisms in the study of algebras with a bilinear form, see~\cite{Leila}.

 \subsection{Digroups}\label{dig}

The content of this subsection is taken from \cite{FacSkew}. A {\sl digroup} is
a triple $(A, *,\circ)$, where $(A, *) $ and $(A,\circ)$ are groups (not-necessarily abelian) for which $1_{(A,*)}=1_{(A,\circ)}$. An {\em ideal} in a digroup $(A, *,\circ)$ is a subset $I$ of $A$ such that $I$ is a normal subgroup of $(A, *)$, $I$  is a normal subgroup of $(A, \circ)$, and  $a*I=a\circ I$ for every $a\in A$  (the cosets modulo $I$ with respect to the two operations coincide).
For a digroup $(A, *,\circ)$, there is a one-to-one correspondence between the set of all ideals of $A$ and the set of all the equivalence relations on $A$ compatible with both the operations $*$ and $\circ$. It associates with any equivalence relation $\sim$ on the set $A$ compatible with both the operations $*$ and $\circ$  of $A$ the equivalence class $[1_A]_\sim$ of the identity of $A$, which is an ideal of the digroup $A$. Conversely, it associates with any ideal $I$ of $A$, the relation $\sim_I$ on $A$ defined, for every $a,b\in A$, by $a\sim_I b$ if $a*b^{-*}\in I$. Here $a^{-*}$ denotes the inverse of $a$ in the group $(D,*)$, and $a^{-\circ}$ will denote the inverse of $a$ in $(D,\circ)$.

Since $1_{(A,*)}=1_{(A,\circ)}$, the kernel of a morphism is defined as the inverse image of $1_{(A,*)}=1_{(A,\circ)}$. The kernel of any morphism is an ideal, as is easily seen.

		There is a forgetful functor $U$ of the category $\DiGp$ of digroups into the category $\Set_*$ of pointed sets, which assigns to each digroup $D$ the pointed set $(D,1)$. Notice that if we have any two group structures $(D,*,1_*)$ and $(D,\circ,1_\circ)$ on the same set $D$, and we take any isomorphism $\varphi\colon (D,1_*)\to(D,1_\circ)$ in $\Set_*$, we can lift the group structure of $(D,\circ,1_\circ)$ to the pointed set $(D,1_*)$, obtaining a digroup structure $(D,*,\circ,1_*)$ on the set $D$.

\medskip

	Given a digroup $(D,*,\circ)$ and an element $a$ of $D$, let $\lambda^D_a\colon D\to D$ denote the mapping defined by
$$\lambda^D_a(b)=a^{-*}*(a\circ b)$$ for every $b\in D$. Then every $\lambda^D_a$ is an automorphism of $U(D)$ in the category $\Set_*$. Its inverse is the isomorphism $(D,1)\to (D,1)$, $c\mapsto a^{-\circ}\circ(a*c)$.  (This is the mapping $\lambda^D_a$ relative to the digroup $(D,\circ,*)$ with the operation $*$ and $\circ$ swapped. Notice that the digroups  $(D,*,\circ)$ and $(D,\circ,*)$ are not isomorphic in the category $\DiGp$.)

The mapping $\lambda^D\colon (D,1)\to (\Aut_{\Set_*}(D,1),\id_D)$, $\lambda^D\colon a\in D\mapsto\lambda_a^D$, is a morphism in the category $\Set_*$ of the pointed set $(D,1)$ into the automorphism group $ \Aut_{\Set_*}(D,1)$ of all automorphisms of  the pointed set $(D,1)$. The property ``$a*I=a\circ I$ for every $a\in D$'' in the definition of ideal $I$ of a digroup $D$ is equivalent to ``$\lambda_a^D(I)\subseteq I$  for every $a\in D$''.
 
 \medskip
 
Any intersection of ideals is an ideal, so that we get a complete lattice $\Cal I(A)$ of ideals for any digroup $A$.  In particular, every subset $X$ of $A$ generates an ideal, the intersection of the ideals that contain $X$.  

\medskip

	Fix a digroup $(D,*,\circ)$ and an idempotent endomorphism $e$ of $D$. Let $B$ be the image of $e$, so that $B$ is a  subdigroup of $D$, and let $I$ be the kernel of $e$, which is an ideal of $D$. Since $e$ is an idempotent group endomorphism with respect to both group structures on $D$, we get that $D$ is a semidirect product of $B$ and $I$ with respect to both groups structures on $D$, so $D=I*B=\{\, i*b\mid i\in I,b\in B\,\}$, $D=I\circ B=\{\, i\circ b\mid i\in I,b\in B\,\}$ and $I\cap B=\{1_D\}$. 
	
\begin{prop}\label{0.1} {\rm \cite[Proposition 2.2]{FacSkew}} Let $(D,*,\circ)$ be  a digroup, $B$ a subdigroup of $D$ and $I$ an ideal of $D$. The following conditions are equivalent:

	\begin{enumerate}
		\item $D=B\circ I$ and $B\cap I = \{1_D\}$.
		\item For every $a\in D$, there is a unique pair $(b,i_1)\in B\times I $ such that $a=b\circ i_1$.
		\item For every $a\in D$, there is a unique pair $(b,i_2)\in B\times I $ such that $a=i_2\circ b$.
		\item $D=B*I$ and $B\cap I = \{1_D\}$.
		\item For every $a\in D$, there is a unique pair $(b,i_3)\in B\times I $ such that $a=b*i_3$.
		\item For every $a\in D$, there is a unique pair $(b,i_4)\in B\times I $ such that $a=i_4* b$.
			\item There is an idempotent digroup  endomorphism $e$ of $D$ whose image is $B$ and whose kernel is $I$.
	\end{enumerate}
 \end{prop}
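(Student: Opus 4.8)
The plan is to exploit the fact that the seven conditions split into two families: (1)--(3) refer only to the group $(D,\circ)$, (4)--(6) only to the group $(D,*)$, and (7) is the bridge between them. First I would treat each group structure in isolation, invoking the classical equivalences of~\ref{2.1}, and only afterwards link the two halves through the defining coset-coincidence property $a*I=a\circ I$ of the ideal $I$.

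\emph{Step 1 (each group structure separately).} Since $I$ is a normal subgroup of $(D,\circ)$ and $B$ a $\circ$-subgroup, I would read (1), (2), (3) as instances of the mutually equivalent conditions (a), (c), (b) of~\ref{2.1} for the group $(D,\circ)$: the conjunction of $D=B\circ I$ with $B\cap I=\{1_D\}$ is precisely unique factorisation, which may be written with either factor first. Identically, (4), (5), (6) are the corresponding instances for $(D,*)$. By condition (d) of~\ref{2.1}, each of (1)--(3) is moreover equivalent to the existence of an idempotent endomorphism $e_\circ$ of $(D,\circ)$ with $\im(e_\circ)=B$ and $\ker(e_\circ)=I$ --- concretely the projection $b\circ i\mapsto b$, which is a $\circ$-homomorphism because $I$ is $\circ$-normal --- and each of (4)--(6) to an idempotent endomorphism $e_*$ of $(D,*)$ with $\im(e_*)=B$, $\ker(e_*)=I$.

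\emph{Step 2 (bridging).} The property $a*I=a\circ I$, valid for all $a$ by definition of ideal, says that the partition of $D$ into cosets of $I$ is the same for both operations. Being a set of representatives of this single partition cannot depend on the operation, so $B$ is a $\circ$-transversal if and only if it is a $*$-transversal, which already yields (1) $\Leftrightarrow$ (4). The essential further claim is that the two projections agree, $e_\circ=e_*$. To see this I would take any $a\in D$ and write $a=b\circ i=b'*i'$ with $b,b'\in B$; then $b\in a\circ I=a*I$, whence $b*I=a*I=b'*I$, and since $b,b'$ both lie in the $*$-transversal $B$ we get $b=b'$, i.e. $e_\circ(a)=e_*(a)$.

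\emph{Step 3 (conclusion).} The common map $e:=e_\circ=e_*$ is then a homomorphism for both operations, fixes $1_D$, is idempotent, and has image $B$ and kernel $I$ (recall that in a digroup the kernel is the preimage of the common identity), so it is an idempotent digroup endomorphism; this is (7). Conversely, an idempotent digroup endomorphism with image $B$ and kernel $I$ restricts to such an endomorphism of each of $(D,\circ)$ and $(D,*)$, so (d) of~\ref{2.1} returns (1) and (4). I expect the single genuinely non-formal point to be the identity $e_\circ=e_*$ of Step 2: every other implication is a direct transcription of the group case, and it is only the coincidence of the $*$- and $\circ$-cosets of the ideal $I$ that forces the two semidirect-product projections to be one and the same digroup endomorphism.
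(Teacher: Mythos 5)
Your proof is correct, and it follows the natural route that the paper itself relies on (the proposition is quoted from \cite[Proposition~2.2]{FacSkew} without a reproduced proof): handle each group structure by the classical equivalences of~\ref{2.1}, then use the ideal's defining property $a*I=a\circ I$ to identify the two coset partitions and hence the two projections $e_\circ=e_*$. You correctly isolate that identification as the only non-formal step, so there is nothing to add.
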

The digroup $D$ is the {\em inner semidirect product} of its subdigroup $B$ and its ideal $I$ if any of the equivalent conditions of the previous proposition holds.

\medskip

By Proposition \ref{0.1}, every element $a\in D$ can be written in a unique way in the four forms $a=b\circ i_1=i_2\circ b=b* i_3=i_4* b$, where $b=e(a)$ and $i_1,i_2,i_3,i_4\in I$. Now it is easy to prove \cite{FacSkew} that the elements $i_2,i_3,i_4$ depend only on $i_1$, via the formulas \begin{equation} i_2=(\phi_{\circ b})^{-1}(i_1), \qquad i_3=\lambda_b(i_1),\qquad{\mbox{\rm and}} \qquad i_4=(\phi_{* b})^{-1}(\lambda_b(i_1)).\label{sistema}\end{equation} 	
Here and in the rest of this subsection we will write conjugation on the right, as one often does in Group Theory, so that, for any two groups $(Y, \otimes)$ and $(K,\otimes)$ and any group antihomomorphism $\phi_\otimes\colon (Y, \otimes)\to\Aut_\Gp(K,\otimes)$, we denote by $(Y\ltimes_{\phi_\otimes}K, \otimes)$ the semidirect product of $Y$ and $K$ via $\phi_\otimes$, where $ \otimes$ is defined by $$(y,k) \otimes (y',k'):=(y\otimes y', \phi_{\otimes y'}(k)\otimes k').$$

Suppose we have a digroup $(D,*,\circ)$. We want to describe the semidirect-product decompositions of $D$. Equivalently, we want to study the idempotent digroup endomorphisms $e$ of $D$. Such idempotent endomorphisms $e$ have a kernel $K$ and an image $Y$, and $D$ turns out to be a semidirect product $D=Y\ltimes K$. By the formulas (\ref{sistema}),  three mappings immediately appear in a natural way:

(1) The group antihomomorphism $\phi_*\colon (Y, *)\to\Aut_\Gp(K,*)$ related to the semi\-direct-product decomposition $D=Y\ltimes K$ of the group $(D,*)$. It is defined by $\phi_*\colon y\mapsto\phi_{*y}$, where $\phi_{*y}(k)=y^{-*}*k*y$ for every $y\in Y$ and every $k\in K$, and is induced by the conjugation in the group $(D,*)$.

(2) The group antihomomorphism $\phi_\circ\colon (Y, \circ)\to\Aut_\Gp(K,\circ)$ related to the semi\-direct-product decomposition $D=Y\ltimes K$ of the group $(D,\circ)$. It is defined by $\phi_\circ\colon y\mapsto\phi_{\circ y}$, where $\phi_{\circ y}(k)=y^{-\circ}\circ k\circ y$ for every $y\in Y$ and $k\in K$, and is induced by the conjugation in the group $(D,\circ)$.

(3) The morphism $\Lambda\colon (Y,1)\to (\Aut_{\Set_*}(K,1),\id_K)$ in the category $\Set_*$, defined by $\Lambda\colon y\mapsto\Lambda_y$, where $\Lambda_y(k)=y^{-*}*(y\circ k)$ for every $y\in Y$, $k\in K$. It is induced by the morphism $\lambda\colon  (D,1)\to (\Aut_{\Set_*}(D,1),\id_D)$ ($\lambda$ is a morphism in $\Set_*$), relative to the digroup $(D,*,\circ)$. Here $\Aut_{\Set_*}(X,x_0)$ denotes the group of all automorphisms of the pointed set $(X,x_0)$. Each $\Lambda_y$ is a permutation of $K$ because its two-sided inverse is the mapping $K\to K$, $k\mapsto y^{-\circ}\circ(y*k)$. Notice that $\Lambda_1\colon K\to K$ is the identity mapping of $K$ and $\Lambda_y(1)=1$ for every $y\in Y$.

\begin{teo}\label{the truth} {\rm \cite[Theorem 3.1]{FacSkew}} Let $(D,*,\circ)$ be any digroup, and suppose that $D$ is the semidirect product of its subdigroup $Y$ and its ideal $K$ (i.e., that the equivalent conditions of Proposition~{\rm \ref{0.1}} are satisfied). Let $\phi_*$, $\phi_\circ$ and $\Lambda$ be the three mappings defined above. Then there is a digroup isomorphism $$\alpha\colon (Y\times K,+, \circ)\to (D,*,\circ)$$ defined by $\alpha(y,k)= y\circ k$ for every $y\in Y$ and  every $k\in K$, where the operations on the digroup $(Y\times K,+,\circ)$ are defined by \begin{equation}(y,k)+(y',k')=(y* y', (\Lambda_{y* y'})^{-1}(\phi_{* y'}(\Lambda_y(k))* \Lambda_{y'}(k')))
    \label{A}
\end{equation}  and \begin{equation}(y,k) \circ(y',k')=(y\circ y', \phi_{\circ y'}(k)\circ k')\label{B}
\end{equation} for every $(y,k),(y',k')\in Y\times K$. \end{teo}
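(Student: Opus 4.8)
The plan is to verify directly that $\alpha(y,k)=y\circ k$ is a bijection intertwining the two pairs of operations, and then to obtain the digroup structure on $(Y\times K,+,\circ)$ together with the isomorphism property in one stroke by transport of structure. Bijectivity is immediate from Proposition~\ref{0.1}: condition~(2), applied with $B=Y$ and $I=K$, states that every $a\in D$ is uniquely of the form $y\circ k$ with $y\in Y$ and $k\in K$, which is precisely the assertion that $\alpha\colon Y\times K\to D$ is a bijection. The two homomorphism identities $\alpha((y,k)\circ(y',k'))=\alpha(y,k)\circ\alpha(y',k')$ and $\alpha((y,k)+(y',k'))=\alpha(y,k)*\alpha(y',k')$, once established, say exactly that the explicit operations $\circ$ and $+$ of~(\ref{A})--(\ref{B}) coincide with the group structures pulled back from $(D,\circ)$ and $(D,*)$ along the bijection $\alpha$; they are therefore group operations, and since $\alpha(1,1)=1\circ 1=1_D$ while $1_{(D,*)}=1_{(D,\circ)}$, the identities of $(Y\times K,\circ)$ and $(Y\times K,+)$ both equal $(1,1)$. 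Hence $(Y\times K,+,\circ)$ is a digroup and $\alpha$ is a digroup isomorphism.

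Everything rests on one relation, obtained by rearranging the definition of $\Lambda$: from $\Lambda_y(k)=y^{-*}*(y\circ k)$ one reads off
\[
y\circ k \;=\; y*\Lambda_y(k)\qquad(y\in Y,\ k\in K),
\]
which converts the $\circ$-decomposition of an element into its $*$-decomposition, using that $\Lambda_y$ is a permutation of $K$. I shall also use the conjugation identities $\kappa*y=y*\phi_{*y}(\kappa)$ and $\kappa\circ y=y\circ\phi_{\circ y}(\kappa)$ for $\kappa\in K$, $y\in Y$, which are immediate from the definitions of $\phi_*$ and $\phi_\circ$.

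For $\circ$ the computation is the classical one for a group semidirect product: expanding $(y\circ k)\circ(y'\circ k')$ by associativity of $\circ$ and pushing $y'$ past $k$ via $k\circ y'=y'\circ\phi_{\circ y'}(k)$ gives $(y\circ y')\circ(\phi_{\circ y'}(k)\circ k')$, which is $\alpha$ applied to the right-hand side of~(\ref{B}). This confirms~(\ref{B}) and that $\alpha$ is a $\circ$-homomorphism.

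The crux is~(\ref{A}), which mixes the two group structures. Using the key relation I rewrite $\alpha(y,k)*\alpha(y',k')=(y*\Lambda_y(k))*(y'*\Lambda_{y'}(k'))$, a product of two elements already in $*$-decomposed form since $\Lambda_y(k),\Lambda_{y'}(k')\in K$. Pushing $y'$ past $\Lambda_y(k)$ with the $*$-conjugation identity collects the $Y$-part $y*y'$ and the $K$-part $w:=\phi_{*y'}(\Lambda_y(k))*\Lambda_{y'}(k')$, which lies in $K$ because $\phi_{*y'}$ preserves $K$. Writing the sought image instead as $\alpha(y*y',v)=(y*y')*\Lambda_{y*y'}(v)$ and invoking uniqueness of the $*$-decomposition (Proposition~\ref{0.1}, condition~(5)) forces $y*y'$ in the first coordinate and $\Lambda_{y*y'}(v)=w$ in the second, whence $v=(\Lambda_{y*y'})^{-1}(w)$, which is exactly~(\ref{A}). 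The main obstacle is not any single hard step but the bookkeeping: one must check at each stage that the intermediate elements remain inside $K$, so that $\phi_{*y'}$ and $(\Lambda_{y*y'})^{-1}$ are legitimately applied, and then read off $v$ correctly through the uniqueness of the $*$-decomposition.
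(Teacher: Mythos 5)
Your proof is correct: the key identity $y\circ k=y*\Lambda_y(k)$, the conjugation identities, and the uniqueness of the $*$- and $\circ$-decompositions from Proposition~\ref{0.1} are exactly the ingredients the paper points to via formulas~(\ref{sistema}) and~(\ref{ppp}), and the transport-of-structure step legitimately yields both that $(Y\times K,+,\circ)$ is a digroup (with common identity $(1,1)$ since $\alpha(1,1)=1_D$) and that $\alpha$ is an isomorphism. The paper itself only cites \cite[Theorem 3.1]{FacSkew} without reproducing the argument, but your computation is essentially the intended one.
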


 The operations $+$ and $ \circ$ in the statement of Theorem~\ref{the truth} may seem quite complicate, but their naturality appears as soon as we look at how they define the products between an element of the form $(y,1)$ and an element of the form $(k,1)$. In fact, it is very easy to see, from the two formulas in Theorem~\ref{the truth}, that for every $y\in Y$ and every $k\in K$: \begin{equation}\begin{array}{l} (y,1) \circ (1,k)=(y,k) \\ (1,k) \circ(y,1)=(y,\phi_{\circ y}(k)) \\
(y,1)+(1,k)=(y,(\Lambda_y)^{-1}(k)) \\ (1,k)+(y,1)=(y,(\Lambda_y)^{-1}\phi_{*y}(k)).\end{array}\label{ppp}\end{equation}

 Of these four identities, the first is a trivial identity, the second allows to define the antihomomorphism $\phi_\circ$, the third allows to define the morphism $\Lambda$, and the fourth, in view of the third, allows to define the antihomomorphism $\phi_*$. Clearly, the last three identities in (\ref{ppp}) correspond to the identities  in (\ref{sistema}). %Notice that the elements $(y,1)$ and $(1,k)$ generate the digroup $(Y\times K,+,\circ)$, so that the four identities in (\ref{ppp}) completely determine the formulas (\ref{A}) and (\ref{B}) for the operations in $Y\times K$.
 
Finally, here is the outer semidirect product of two digroups:

\begin{teo}\label{thm2} {\rm \cite[Theorem 4.1]{FacSkew}} Let $(Y,*,\circ)$ and $(K,*,\circ)$ be digroups, $\phi_*\colon (Y, *)\to\Aut_\Gp(K,*)$ and $\phi_\circ\colon (Y, \circ)\to\Aut_\Gp(K,\circ)$ be two group antihomomorphisms, and $\Lambda\colon (Y,1)\to (S_K,\id)$ be a morphism in $\Set_*$. On the cartesian product $Y\times K$ define two operations via \begin{equation}(y,k)+(y',k')=(y* y', (\Lambda_{y* y'})^{-1}(\phi_{* y'}(\Lambda_y(k))* \Lambda_{y'}(k')))\label{+}\end{equation} 
and \begin{equation}(y,k) \circ(y',k')=(y\circ y', \phi_{\circ y'}(k)\circ k')\label{circ}\end{equation}  for every $y,y'\in Y$ and $k,k'\in K$. Then $(Y\times K,+,\circ)$ is a digroup.\end{teo}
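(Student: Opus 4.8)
The plan is to check the three requirements in the definition of a digroup: that $(Y\times K,\circ)$ is a group, that $(Y\times K,+)$ is a group, and that the two groups share a common identity.

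I would dispose of $\circ$ first, since it requires no work. The operation (\ref{circ}) is literally the group semidirect product $(Y\ltimes_{\phi_\circ}K,\circ)$ in the notation fixed before Theorem~\ref{the truth}: the formula $(y,k)\circ(y',k')=(y\circ y',\phi_{\circ y'}(k)\circ k')$ is the operation $\otimes=\circ$ attached to the group antihomomorphism $\phi_\circ\colon(Y,\circ)\to\Aut_\Gp(K,\circ)$, so $(Y\times K,\circ)$ is a group. Since $\phi_\circ$ is an antihomomorphism into $\Aut_\Gp(K,\circ)$ one has $\phi_{\circ 1_Y}=\id_K$ and $\phi_{\circ y'}(1_K)=1_K$, whence its identity is $(1_Y,1_K)$.

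The substance of the argument is the operation $+$, and the idea I would use is a change of coordinates that untwists the permutations $\Lambda_y$. Each $\Lambda_y\in S_K$ is a bijection of $K$, so the map $\beta\colon Y\times K\to Y\times K$, $\beta(y,k)=(y,\Lambda_y(k))$, is a bijection with inverse $(y,u)\mapsto(y,\Lambda_y^{-1}(u))$. Setting $u=\Lambda_y(k)$ and $v=\Lambda_{y'}(k')$, applying $\Lambda_{y*y'}$ to the second coordinate of (\ref{+}) cancels the factor $(\Lambda_{y*y'})^{-1}$, and a direct computation gives $\beta\big((y,k)+(y',k')\big)=(y*y',\phi_{*y'}(u)*v)=\beta(y,k)\oplus\beta(y',k')$, where $(y,u)\oplus(y',v):=(y*y',\phi_{*y'}(u)*v)$. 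But $\oplus$ is exactly the group semidirect product $(Y\ltimes_{\phi_*}K,*)$ attached to the antihomomorphism $\phi_*\colon(Y,*)\to\Aut_\Gp(K,*)$, hence a group. Thus $\beta$ is an isomorphism of magmas from $(Y\times K,+)$ onto a group, and so $(Y\times K,+)$ is itself a group, with associativity, inverses, and identity all inherited for free; in particular nothing from the intricate formula (\ref{+}) needs to be verified by hand.

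Finally I would match the two identities. The identity of $\oplus$ is $(1_Y,1_K)$, so the identity of $+$ is $\beta^{-1}(1_Y,1_K)=(1_Y,\Lambda_{1_Y}^{-1}(1_K))$; since $\Lambda$ is a morphism in $\Set_*$ from $(Y,1)$ to $(S_K,\id)$, it sends $1_Y$ to $\id_K$, whence $\Lambda_{1_Y}=\id_K$ and $\beta^{-1}(1_Y,1_K)=(1_Y,1_K)$. Both $+$ and $\circ$ therefore have identity $(1_Y,1_K)$, and $(Y\times K,+,\circ)$ is a digroup. The only real obstacle is spotting the conjugating bijection $\beta$: once $\beta(y,k)=(y,\Lambda_y(k))$ is in hand, the forbidding-looking operation (\ref{+}) is revealed to be merely the $\Lambda$-conjugate of an ordinary semidirect product, and I expect the whole verification to reduce to this single observation. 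I note also that the full force of $\Lambda$ being a $\Set_*$-morphism enters only through $\Lambda_{1_Y}=\id_K$; the individual $\Lambda_y$ need not fix $1_K$.
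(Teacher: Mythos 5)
Your proof is correct. Note first that the paper itself offers no proof of this statement: Theorem~\ref{thm2} is quoted from \cite[Theorem 4.1]{FacSkew}, so the only in-paper comparison available is with the surrounding discussion, where the expected route (and the one suggested by the treatment of Theorem~\ref{the truth} and formulas (\ref{ppp})) is a direct, coordinate-by-coordinate verification of associativity, identity and inverses for the two operations. Your argument is a genuinely different and slicker route for the hard half: you observe that $\circ$ is literally the twisted product $(Y\ltimes_{\phi_\circ}K,\circ)$ in the notation fixed before Theorem~\ref{the truth} (for which associativity follows from $\phi_{\circ(y\circ y')}=\phi_{\circ y'}\circ\phi_{\circ y}$, i.e.\ the antihomomorphism property), and that the forbidding operation $+$ is the transport of the analogous product $(Y\ltimes_{\phi_*}K,*)$ along the bijection $\beta(y,k)=(y,\Lambda_y(k))$; I checked the computation $\beta\bigl((y,k)+(y',k')\bigr)=\beta(y,k)\oplus\beta(y',k')$ and it is exactly right, so $(Y\times K,+)$ inherits the group structure with no further verification. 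Your closing remarks are also accurate and worth keeping: the hypothesis that $\Lambda$ is a $\Set_*$-morphism is used only to force $\Lambda_{1_Y}=\id_K$, which is precisely what moves the identity of $+$ from $(1_Y,\Lambda_{1_Y}^{-1}(1_K))$ to $(1_Y,1_K)$ so that the two groups share an identity, as the definition of digroup in Subsection~\ref{dig} requires. What your approach buys is economy and an explanation of where formula (\ref{+}) comes from (it is the unique operation making $\beta$ an isomorphism onto an ordinary twisted product); what the computational approach buys is independence from spotting the conjugating bijection. The one small thing I would ask you to make explicit is the verification that the twisted product attached to an antihomomorphism is indeed a group, since the paper introduces that construction without proof; it is a three-line associativity check, but your argument leans on it twice.
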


\begin{cor}\label{2.5} Let $(Y,*,\circ)$ and $(K,*,\circ)$ be digroups, $\phi_*\colon (Y, *)\to\Aut_\Gp(K,*)$ and $\phi_\circ\colon (Y, \circ)\to\Aut_\Gp(K,\circ)$ be two group antihomomorphisms,\linebreak $\Lambda\colon (Y,1)\to (S_K,\id)$ be a morphism in $\Set_*$, and construct the correponding semidirect product $(Y\times K,+,\circ)$. Then $(Y\times K,+,\circ)$ is canonically the direct product of the digroups $Y$ and $K$ if and only if the three mappings $\phi_*$, $\phi_\circ$ and $\Lambda$ are all the constant mapping that sends every element of $Y$ to the identity $\id_K$
    of $K$.
\end{cor}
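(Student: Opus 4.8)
The plan is to read ``canonically the direct product'' as the statement that the operations $+$ and $\circ$ defined in (\ref{+}) and (\ref{circ}) are the componentwise operations of the direct product digroup, i.e.\ that $(y,k)+(y',k')=(y*y',k*k')$ and $(y,k)\circ(y',k')=(y\circ y',k\circ k')$ for all $y,y'\in Y$ and $k,k'\in K$; equivalently, the identity map of the underlying set $Y\times K$ is a digroup isomorphism onto the direct product. The two implications are then a matter of substitution and of specializing the arguments.

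For the ``if'' direction I would simply substitute the hypothesis. If $\phi_*$, $\phi_\circ$ and $\Lambda$ are all the constant map at $\id_K$, then the second component of (\ref{circ}) becomes $\phi_{\circ y'}(k)\circ k'=k\circ k'$, while the second component of (\ref{+}) becomes $(\id_K)^{-1}(\id_K(\id_K(k))*\id_K(k'))=k*k'$. Hence both operations are componentwise and $(Y\times K,+,\circ)$ is exactly the direct product. This step is routine.

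For the ``only if'' direction I would extract the three triviality statements one at a time, using the normalizations already recorded in the text, namely $\Lambda_1=\id_K$ and $\Lambda_y(1)=1$ for every $y$, together with the fact that a group (anti)homomorphism fixes the identity (so $\phi_{*1}=\id_K$). First, assuming (\ref{circ}) is componentwise, the second components give $\phi_{\circ y'}(k)\circ k'=k\circ k'$ for all $y',k,k'$; putting $k'=1$ yields $\phi_{\circ y'}(k)=k$, so $\phi_\circ$ is constant at $\id_K$. Next, assuming (\ref{+}) is componentwise, the second components give $(\Lambda_{y*y'})^{-1}(\phi_{*y'}(\Lambda_y(k))*\Lambda_{y'}(k'))=k*k'$ for all $y,y',k,k'$; setting $y'=1$ and $k=1$ collapses the left side, via $\phi_{*1}=\id_K$, $\Lambda_y(1)=1$ and $\Lambda_1=\id_K$, to $(\Lambda_y)^{-1}(k')$, which must equal $k'$, forcing $\Lambda_y=\id_K$ for every $y$. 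Finally, with $\Lambda$ now known to be constant at $\id_K$, the same identity reduces to $\phi_{*y'}(k)*k'=k*k'$; putting $k'=1$ gives $\phi_{*y'}=\id_K$, so $\phi_*$ is constant at $\id_K$ as well.

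I do not expect a genuine obstacle here: the content lies entirely in choosing the right specializations and in the bookkeeping order. The only point requiring care is to isolate $\Lambda$ \emph{before} $\phi_*$ in the $+$-equation, since the two are entangled in (\ref{+}); the normalizations $\Lambda_1=\id_K$ and $\Lambda_y(1)=1$ are precisely what make the chosen substitutions decouple the three maps cleanly.
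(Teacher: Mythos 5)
Your proof is correct and follows essentially the same route as the paper: read ``canonically the direct product'' as the operations being componentwise, note the ``if'' direction is immediate substitution, and for ``only if'' extract the triviality of $\phi_\circ$, $\Lambda$, $\phi_*$ in turn by specializing arguments in the two component equations. The only cosmetic difference is that you kill $\Lambda$ by setting $y'=1$, $k=1$ directly, whereas the paper first derives $\Lambda_{y'}=\Lambda_{y*y'}$ (from $k=1$) and then takes $y=(y')^{-*}$; both rest on the same normalizations $\Lambda_1=\id_K$, $\Lambda_y(1)=1$, $\phi_{*1}=\id_K$.
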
 

\begin{Proof} If $(Y\times K,+,\circ)$ is the direct product, then \begin{equation}k\circ k'= \phi_{\circ y'}(k)\circ k'\quad\mbox{\rm and}\quad k*k'=(\Lambda_{y* y'})^{-1}(\phi_{* y'}(\Lambda_y(k))* \Lambda_{y'}(k'))
\label{circ1}\end{equation} for every $y,y'\in Y$ and $k,k'\in K$. Now the first equation in~(\ref{circ1}) is clearly equivalent to $k=\phi_{\circ y'}(k)$ for every $k$, i.e.~that $\phi_\circ$ is the constant mapping that sends every element of $Y$ to $\id_K$. The second equation in~(\ref{circ1}) can be rewritten as $$\phi_{* y'}(\Lambda_y(k))* \Lambda_{y'}(k')=\Lambda_{y* y'}(k*k'),$$ and from this equation we get the equalities \begin{equation}\phi_{* y'}(\Lambda_y(k))=\Lambda_{y* y'}(k)\ \mbox{\rm (for $k'=1$)}\quad \mbox{\rm and}\quad \Lambda_{y'}(k')=\Lambda_{y* y'}(k')\ \mbox{\rm (for $k=1$).}
\label{xxx}\end{equation}  From the last equation in~(\ref{xxx}), we get for $y=(y')^{-*}$ that $$\Lambda_{y'}(k')=\Lambda_{(y')^{-*}* y'}(k')=k',$$ i.e., that $\Lambda$  is the constant mapping that sends every element of $Y$ to $\id_K$. From this and the first equation in~(\ref{xxx})
we get that $\phi_{* y'}$ is also the identity automorphism of $K$.

The inverse implication is trivial.
\end{Proof}
 \subsection{Left skew braces}\label{5.1}
 
A {\sl (left) skew brace} is
a triple $(A, *,\circ)$, where $(A, *) $ and $(A,\circ)$ are groups (not necessarily abelian) and \begin{equation}a\circ (b * c) = (a\circ b)*a^{-*}* ( a\circ c)\label{lsb}\end{equation}
for every $a,b,c\in A$.  It is not difficult to show that left skew braces are digroups, so that all what we have seen for digroups applies to left skew braces.

\medskip

%The simplest examples of left skew braces are:
%
%\medskip

%(1) For any associative ring $(R,+\cdot)$, the Jacobson radical $(J(R), +,\circ)$, where $\circ$ is the operation on $J(R)$ defined by $x\circ y=xy+x+y$ for every $x,y\in J(R)$.
%
%\medskip

%(2) For any group $(G,*)$, the left skew brace %$(G,*,*)$.
%
%\medskip

Clearly, left skew braces form an algebraic variety, whose homomorphisms $A\to A'$ are the mappings $f\colon A\to A'$ such that $f(a*b)=f(a)* f(b)$ and $f(a\circ b)=f(a)\circ f(b)$ for every $a,b\in A$.  In particular, we have the category $\SKB$ of all left skew braces.

\begin{lem}\label{Bachi}{\rm \cite{Bac}} A digroup $(A,*,\circ)$ is a left skew brace if and only if the mapping $\lambda\colon (A,\circ)\to\Aut (A,*)$, given by $\lambda\colon a\mapsto\lambda_a$, where $\lambda_a(b)=a^{-*}* (a\circ b)$, is a group morphism.\end{lem}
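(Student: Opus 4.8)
The plan is to read off from the statement exactly what ``group morphism'' demands and match each requirement to the skew brace axiom~(\ref{lsb}). Saying that $\lambda\colon(A,\circ)\to\Aut(A,*)$ is a group morphism unpacks into two assertions: (i) for each $a\in A$ the map $\lambda_a$ is an automorphism of the group $(A,*)$, so that $\lambda$ is well defined with the stated codomain; and (ii) $\lambda_{a\circ a'}=\lambda_a\circ\lambda_{a'}$ for all $a,a'\in A$. (The normalization $\lambda_1=\id$ is automatic, since $\lambda_1(b)=1^{-*}*(1\circ b)=b$.) I would prove the lemma by showing that (i) is equivalent to~(\ref{lsb}) and that, granting~(\ref{lsb}), property~(ii) comes for free.

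First I would establish the equivalence of (i) with~(\ref{lsb}). Expanding both sides, $\lambda_a(b*c)=a^{-*}*(a\circ(b*c))$ while $\lambda_a(b)*\lambda_a(c)=a^{-*}*(a\circ b)*a^{-*}*(a\circ c)$; multiplying on the left by $a$ in the group $(A,*)$ shows that $\lambda_a$ preserves $*$ precisely when $a\circ(b*c)=(a\circ b)*a^{-*}*(a\circ c)$, which is~(\ref{lsb}). Now from the digroup theory recalled in Subsection~\ref{dig}, each $\lambda_a$ is already a bijection of $A$ fixing $1$, with two-sided inverse $c\mapsto a^{-\circ}\circ(a*c)$; hence $\lambda_a$ is an automorphism of $(A,*)$ if and only if it is merely an endomorphism. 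This already disposes of the backward implication: if $\lambda$ is a group morphism then each $\lambda_a$ lies in $\Aut(A,*)$, so~(\ref{lsb}) holds and $A$ is a left skew brace.

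For the forward implication only (ii) remains. The device I would use is the reformulation $a\circ b=a*\lambda_a(b)$, read straight off the definition of $\lambda_a$. I would compute $(a\circ a')\circ b$ two ways: once directly as $(a\circ a')*\lambda_{a\circ a'}(b)$, and once, via $\circ$-associativity, as $a\circ(a'\circ b)=a*\lambda_a\bigl(a'*\lambda_{a'}(b)\bigr)$. Invoking that $\lambda_a$ preserves $*$ (established in the first step) to rewrite the second expression as $a*\lambda_a(a')*\lambda_a\lambda_{a'}(b)=(a\circ a')*\lambda_a\lambda_{a'}(b)$, I can then equate the two and cancel $(a\circ a')$ on the left in $(A,*)$ to obtain $\lambda_{a\circ a'}(b)=\lambda_a\lambda_{a'}(b)$ for every $b$, which is~(ii).

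I expect the genuinely delicate step to be this last computation: the argument juggles three operations in close quarters ($*$, $\circ$, and composition of maps), so the difficulty is essentially one of bookkeeping — applying the homomorphism property of $\lambda_a$ at exactly the right spot and cancelling in the correct group. The equivalence (i)$\Leftrightarrow$(\ref{lsb}) and the endomorphism-versus-automorphism reduction are routine once the inverse of $\lambda_a$ from Subsection~\ref{dig} is quoted.
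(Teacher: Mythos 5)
Your proof is correct and complete: the equivalence of ``each $\lambda_a$ is a $*$-endomorphism'' with identity~(\ref{lsb}) is exactly right, the reduction from automorphism to endomorphism via the explicit inverse $c\mapsto a^{-\circ}\circ(a*c)$ from Subsection~\ref{dig} is legitimate, and the derivation of $\lambda_{a\circ a'}=\lambda_a\lambda_{a'}$ from $\circ$-associativity together with $a\circ b=a*\lambda_a(b)$ is the standard argument. Note that the paper itself gives no proof of this lemma (it is quoted from \cite{Bac}), so there is nothing to compare against; your argument matches the usual one in the skew brace literature.
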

 
We say that the left skew brace $D$ is the {\em inner semidirect product} of its subbrace $B$ and its ideal $I$ if any of the equivalent conditions of Proposition~\ref{0.1} holds.

\medskip

There is a bijection between the set of all idempotent left skew brace endomorphisms of a left skew brace $D$ and the set of all pairs $(B,I)$, where $B$ is a  subleft skew brace of $D$, $I$ is an ideal of $D$ and $D$ is the semidirect product of $B$ and $I$. This bijection associates with each idempotent endomorphism $e$ the pair $(e(D),\ker(e))$.
	
	\medskip

	If $(D,*,\circ)$ is a left skew brace, $e$ is an idempotent group endomorphism of either $(D,*)$ or $(D,\circ)$, the kernel of $e$ is an ideal of the left skew brace $(D,*,\circ)$ and the image of $e$ is a subleft skew brace, then $e$ is an idempotent left skew brace endomorphism of $(D,*,\circ)$.

\begin{prop} {\rm \cite[Proposition 4.2]{FacSkew}} Let $(Y,*,\circ)$ and $(K,*,\circ)$ be left skew braces and suppose that there are  two group antihomomorphisms $\phi_*\colon (Y, *)\to\Aut_\Gp(K,*)$ and $\phi_\circ\colon (Y, \circ)\to\Aut_\Gp(K,\circ)$, and a group homomorphism $\Lambda\colon (Y,\circ)\to\Aut_\Gp(K,*)$. Then their semidirect product (as digroups) is a left skew brace if and only if $$\begin{array}{l}
        \phi_{\circ (y'*y'')}(k)\circ (\Lambda_{y'*y''})^{-1}\bigl(\phi_{*y''}(\Lambda_{y'}(k'))*\Lambda_{y''}(k'')\bigr) = \\ \quad=
        (\Lambda_{y\circ (y'*y'')})^{-1}\bigl(\phi_{*\lambda_{y}^Y(y'')}(\Lambda_{y\circ y'}(\phi_{\circ y'}(k)\circ k')* \\ \quad*\Lambda_{y}(k)^{-*})\bigr)*\Lambda_{y\circ y''}(\phi_{\circ y''}(k)\circ k'').
    \end{array}$$
\end{prop}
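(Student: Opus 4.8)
The plan is to verify directly the defining identity~(\ref{lsb}) of a left skew brace for the digroup $(Y\times K,+,\circ)$ furnished by Theorem~\ref{thm2}. By that theorem $(Y\times K,+,\circ)$ is already a digroup, so $(Y\times K,+)$ and $(Y\times K,\circ)$ are groups sharing the identity $(1,1)$, and the only thing left to decide is whether the compatibility axiom
$$a\circ(b+c)=(a\circ b)+a^{-+}+(a\circ c)$$
holds for all $a,b,c\in Y\times K$, where $a^{-+}$ denotes the inverse of $a$ in $(Y\times K,+)$ (this is~(\ref{lsb}) with the additive operation written $+$). Both sides lie in $Y\times K$, so the axiom holds if and only if it holds in each of the two coordinates, and I would establish the stated equivalence by computing the two coordinates separately.

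First I would fix arbitrary elements $a=(y,k)$, $b=(y',k')$, $c=(y'',k'')$ and examine the first ($Y$-)coordinate. In the first coordinate~(\ref{circ}) is the plain product $\circ$ of $Y$ and~(\ref{+}) is the product $*$ of $Y$, so the first coordinate of the left-hand side is $y\circ(y'*y'')$ while that of the right-hand side is $(y\circ y')*y^{-*}*(y\circ y'')$. These agree for all $y,y',y''$ precisely because $Y$ is itself a left skew brace, i.e.\ by~(\ref{lsb}) applied inside $Y$. Hence the first coordinate imposes no condition, and the whole axiom is equivalent to the equality of the second ($K$-)coordinates; this is what will give the ``if and only if''.

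It remains to compute the second coordinates. For the left-hand side I would apply~(\ref{+}) to get $b+c=(y'*y'',\,(\Lambda_{y'*y''})^{-1}(\phi_{*y''}(\Lambda_{y'}(k'))*\Lambda_{y''}(k'')))$ and then~(\ref{circ}) to reach
$$\phi_{\circ(y'*y'')}(k)\circ(\Lambda_{y'*y''})^{-1}\bigl(\phi_{*y''}(\Lambda_{y'}(k'))*\Lambda_{y''}(k'')\bigr),$$
which is already the left-hand side of the asserted identity. For the right-hand side I would compute $a\circ b=(y\circ y',\phi_{\circ y'}(k)\circ k')$ and $a\circ c=(y\circ y'',\phi_{\circ y''}(k)\circ k'')$ from~(\ref{circ}), compute the $+$-inverse $a^{-+}=(y^{-*},(\Lambda_{y^{-*}})^{-1}((\phi_{*y^{-*}}(\Lambda_y(k)))^{-*}))$ from~(\ref{+}) together with $\Lambda_1=\id$, and then assemble $(a\circ b)+a^{-+}+(a\circ c)$ by two applications of~(\ref{+}). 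Reading off the second coordinate and simplifying is expected to yield exactly the right-hand side of the asserted identity.

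The real work, and the main obstacle, is the simplification in this last step. Three reductions carry it through: the factor $\Lambda_{y^{-*}}$ cancels the $(\Lambda_{y^{-*}})^{-1}$ coming from $a^{-+}$, and since $\phi_{*y^{-*}}$ is an automorphism of $(K,*)$ one pulls a product out of it to form the combination $\Lambda_{y\circ y'}(\phi_{\circ y'}(k)\circ k')*\Lambda_y(k)^{-*}$; the two nested $\phi_*$'s then merge via the \emph{antihomomorphism} property $\phi_{*(y\circ y'')}\circ\phi_{*y^{-*}}=\phi_{*(y^{-*}*(y\circ y''))}=\phi_{*\lambda_y^Y(y'')}$, which is exactly where the map $\lambda_y^Y(y'')=y^{-*}*(y\circ y'')$ of $Y$ enters the subscript; and the outer index $\Lambda_{(y\circ y')*y^{-*}*(y\circ y'')}=\Lambda_{y\circ(y'*y'')}$ is identified by~(\ref{lsb}) in $Y$ once more. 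Keeping straight which of $\phi_*,\phi_\circ,\Lambda$ is a homomorphism and which is an antihomomorphism, and in which order the compositions occur, is the only delicate point; once the bookkeeping is organized as above the two second coordinates coincide with the two sides of the displayed identity, and the equivalence follows. (Alternatively, one could run the same computation through the $\lambda$-criterion of Lemma~\ref{Bachi}, checking that $a\mapsto[\,b\mapsto a^{-+}+(a\circ b)\,]$ is a homomorphism $(Y\times K,\circ)\to\Aut(Y\times K,+)$, but the direct check above keeps the stated identity in view throughout.)
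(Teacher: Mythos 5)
The paper itself gives no proof of this proposition: it is imported verbatim from \cite[Proposition 4.2]{FacSkew}, so there is no in-paper argument to compare against. Your plan is the natural one and, as an outline, it is sound: Theorem~\ref{thm2} already guarantees the digroup structure, so only identity~(\ref{lsb}) is at stake; the first coordinates of $a\circ(b+c)$ and $(a\circ b)+a^{-+}+(a\circ c)$ agree because $Y$ is a left skew brace, so the condition lives entirely in the $K$-coordinate; your formula for $a^{-+}$ is correct (using $\Lambda_{1}=\id_K$, which follows from $\Lambda$ being a homomorphism on $(Y,\circ)$ and $1_*=1_\circ$); and the three reductions you isolate --- cancelling $\Lambda_{y^{-*}}\circ(\Lambda_{y^{-*}})^{-1}$, pulling the $*$-product and the $*$-inverse through the automorphism $\phi_{*y^{-*}}$, and merging $\phi_{*(y\circ y'')}\circ\phi_{*y^{-*}}=\phi_{*(y^{-*}*(y\circ y''))}=\phi_{*\lambda_y^Y(y'')}$ together with $\Lambda_{(y\circ y')*y^{-*}*(y\circ y'')}=\Lambda_{y\circ(y'*y'')}$ --- are exactly the identities that make the computation close up.

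One point you should check carefully when you actually finish the computation: applying formula~(\ref{+}) to the final sum $\bigl((a\circ b)+a^{-+}\bigr)+(a\circ c)$ produces a second coordinate of the form $(\Lambda_{y\circ(y'*y'')})^{-1}\bigl(\phi_{*\lambda_y^Y(y'')}(\cdots)*\Lambda_{y\circ y''}(\phi_{\circ y''}(k)\circ k'')\bigr)$, i.e.\ with the factor $\Lambda_{y\circ y''}(\phi_{\circ y''}(k)\circ k'')$ \emph{inside} the argument of $(\Lambda_{y\circ(y'*y'')})^{-1}$, whereas the identity as displayed in the proposition places that factor \emph{outside}. You should resolve whether this is a misplaced parenthesis in the quoted statement or a step you are missing; your claim that the simplification ``yields exactly the right-hand side of the asserted identity'' is not literally true as the formula is printed, and since the whole content of the proposition is that displayed identity, the discrepancy cannot be waved away. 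Apart from that bookkeeping issue, the method is correct and would establish the stated equivalence.
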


Consider the inclusion functor $S\colon\SKB\to \DiGp$ from the category of left skew braces to the category of digroups. This functor $S$ has a left adjoint, which assigns with every digroup $D$ the quotient digroup $D/I$, where $I$ is the ideal of the digroup $D$ generated by the subset $\{\,  (a\circ b)*a^{-1}* ( a\circ c)*(a\circ (b * c))^{-*}\mid a,b,c\in D\,\}$. Clearly, $D/I$ is a left skew brace, and the category $\SKB$ of left skew braces is a reflective subcategory of the category $\DiGp$ of digroups.

\medskip

The commutator of two ideals $I$ and $J$ of a left skew brace $A$ is the ideal of $A$ generated by the commutator $[I,J]_{(A,*)}$ in the group $(A,*)$, the commutator $[I,J]_{(A,\circ)}$ in the group $(A,\circ)$, and all the elements $(i\circ j)^{-*}*i*j$ with $i\in I$ and $j\in J$ \cite{BFP}. Notice the clear relation between these three sets whose union generates the commutator and the three conditions necessary and sufficient for a semidirect product of digroups (left skew braces) to be a direct product (Corollary~\ref{2.5}). 

\begin{lem}\label{comm} For every pair of ideals $I$ and $J$ of a left skew brace $A$, one has $[I,J]=[J,I]$.\end{lem}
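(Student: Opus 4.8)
The plan is to prove symmetry of the commutator $[I,J]=[J,I]$ by showing that the three generating sets for $[I,J]$ and those for $[J,I]$ generate the same ideal. Recall that $[I,J]$ is the ideal of $A$ generated by the union of $[I,J]_{(A,*)}$, the commutator in $(A,*)$; $[I,J]_{(A,\circ)}$, the commutator in $(A,\circ)$; and the set $S_{I,J}:=\{\,(i\circ j)^{-*}*i*j\mid i\in I,\ j\in J\,\}$. Since ordinary group commutators are visibly symmetric up to inversion, namely $[j,i]_{(A,*)}=([i,j]_{(A,*)})^{-*}$ and likewise for $\circ$, the two group-theoretic pieces immediately yield the same normal closures with respect to each group operation. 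Thus the whole problem reduces to comparing the ideals generated by $S_{I,J}$ and $S_{J,I}$ together with the (already symmetric) commutator pieces.

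First I would fix notation for the mixed generator, writing $c(i,j):=(i\circ j)^{-*}*i*j$, and compute $c(j,i)=(j\circ i)^{-*}*j*i$ for $i\in I$, $j\in J$. The key step is to express $c(j,i)$ in terms of $c(i,j)$ modulo the two group commutators $[I,J]_{(A,*)}$ and $[I,J]_{(A,\circ)}$. The natural tool is the left skew brace identity~(\ref{lsb}), $a\circ(b*c)=(a\circ b)*a^{-*}*(a\circ c)$, which controls how $\circ$ distributes over $*$ and therefore how the mixed terms $i\circ j$ and $j\circ i$ relate. Using~(\ref{lsb}) (and its consequences, such as the behaviour of $\lambda_a(b)=a^{-*}*(a\circ b)$ from Lemma~\ref{Bachi}) one rewrites $(j\circ i)^{-*}*j*i$ and pushes all occurrences of genuine $*$-commutators and $\circ$-commutators of an element of $I$ with an element of $J$ into the two symmetric pieces of the commutator. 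Because both $I$ and $J$ are ideals — hence normal subgroups of both $(A,*)$ and $(A,\circ)$ and closed under the $\lambda$-action — every rearrangement stays inside $I$ and $J$, and every discrepancy is a product of $*$- or $\circ$-commutators $[i,j]$, which already lie in $[I,J]=[J,I]$ on those two coordinates.

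Having shown $c(j,i)\in[I,J]$ for all $i\in I$, $j\in J$, and symmetrically $c(i,j)\in[J,I]$, I would conclude that the three generating sets of $[J,I]$ are contained in the ideal $[I,J]$ and vice versa; since each of $[I,J]$ and $[J,I]$ is by definition the smallest ideal containing its generators, the two mutual inclusions give $[I,J]=[J,I]$. It is worth noting that the cited paper~\cite{BFP} establishes the commutator theory here, so I would feel free to use any structural identities for $\lambda$ and for the interaction of the two group structures that are standard in that setting.

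The hard part will be the explicit algebraic manipulation of the mixed generator $c(j,i)$ using~(\ref{lsb}): one must carefully track which terms are honest commutators (and thus absorbable into the symmetric pieces) versus which genuinely require the brace axiom, and verify that no ``leftover'' element outside $I\cup J$ or outside the three generating families is produced. This bookkeeping — keeping every factor inside $I$ or $J$ while converting $(j\circ i)^{-*}*j*i$ into a product of $c(i,j)^{\pm}$ and group commutators — is where the computation is most delicate, and it is the step I would write out with the greatest care.
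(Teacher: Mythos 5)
Your overall framing matches the paper's: the two group-commutator generating sets are symmetric, so everything reduces to showing that the mixed generator $c(j,i)=(j\circ i)^{-*}*j*i$ lies in $[I,J]$ (and symmetrically). But that reduction is the easy half, and your proposal stops exactly where the proof has to happen. You write that "the hard part will be the explicit algebraic manipulation of the mixed generator $c(j,i)$ using~(\ref{lsb})" and that this is "the step I would write out with the greatest care" --- in other words, the one claim that carries the content of the lemma, namely $c(j,i)\in[I,J]$, is never actually established. A sketch that says "one rewrites $(j\circ i)^{-*}*j*i$ and pushes all occurrences of commutators into the symmetric pieces" without exhibiting the rewriting is a plan, not a proof; as it stands there is a genuine gap.

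What makes the gap avoidable is that you are anticipating a delicate computation where none is needed. The paper's trick is to pass to the quotient $A/[I,J]$: there the images of $I$ and $J$ centralize each other under both $*$ and $\circ$, and the relation $(i\circ j)^{-*}*i*j=1$ holds, i.e.\ $i\circ j=i*j$. Then simply
$$j\circ i=i\circ j=i*j=j*i,$$
so $(j\circ i)^{-*}*j*i=1$ in the quotient, which is exactly the statement $c(j,i)\in[I,J]$. No appeal to the brace identity~(\ref{lsb}) or to the $\lambda$-maps of Lemma~\ref{Bachi} is required, and there is no bookkeeping of "leftover" terms. If you restructure your argument as "work modulo $[I,J]$, where all three families of generators of $[I,J]$ vanish, and check the generators of $[J,I]$ also vanish there," the delicate step you were dreading collapses to the one-line chain of equalities above.
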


\begin{proof} For groups, one has that $[I,J]_{(A,*)}=[J,I]_{(A,*)}$ and $[I,J]_{(A,\circ)}=[J,I]_{(A,\circ)}$. Hence it remains to show that if $(A,*,\circ)$ is a left skew brace with the groups $(A,*)$ and $(A,\circ)$ abelian and $(i\circ j)^{-*}*i*j=1$ for every $i\in I$ and $j\in J$, then  $(j\circ i)^{-*}*j*i=1$ for every $i\in I$ and $j\in J$. Now $(i\circ j)^{-*}*i*j=1$ implies that $i\circ j=i*j$, so that $j\circ i=i\circ j=i*j=j*i$. Hence $(j\circ i)^{-*}*j*i=1$.\end{proof}

Recall that in the lattice of all the ideals of a left skew brace $A$, one has that $I\vee J=I*J=I\circ J$ and $I\wedge J=I\cap J$. This is a multiplicative lattice \cite{FFJ} in which multiplication is commutative in view of Lemma~\ref{comm}.

\begin{lem} If $I,J,K$ are ideals of a left skew brace $A$, then $[I,J*K]=[I,J]*[I,K]$.\end{lem}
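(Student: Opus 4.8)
The plan is to prove the two inclusions separately, the containment $[I,J]*[I,K]\subseteq[I,J*K]$ being routine and the reverse containment carrying all the work. For the easy direction, observe that $J*K=J\vee K$ contains both $J$ and $K$, so every generator of $[I,J]$ (resp.\ of $[I,K]$) is already among the generators of $[I,J*K]$; since $[I,J*K]$ is an ideal, hence closed under $*$, it contains the ideal $[I,J]*[I,K]$.

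For the reverse inclusion I would set $L:=[I,J]$ and $M:=[I,K]$ and check that each of the three families of generators of $[I,J*K]$ lies in the ideal $L*M$ (freely using $L*M=L\circ M=M*L$, the last equality by Lemma~\ref{comm}). The two group-commutator families are handled by the classical distributivity of commutators over products of normal subgroups: in $(A,*)$ one has $J*K=JK$ as a product of normal subgroups, whence $[I,J*K]_{(A,*)}=[I,J]_{(A,*)}*[I,K]_{(A,*)}\subseteq L*M$, and similarly $[I,J*K]_{(A,\circ)}=[I,J]_{(A,\circ)}\circ[I,K]_{(A,\circ)}\subseteq L*M$ in $(A,\circ)$.

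The real obstacle is the mixed family of generators $c(i,x):=(i\circ x)^{-*}*i*x$ with $x\in J*K$. The first move is to rewrite $c$ through $\lambda$: since $i\circ x=i*\lambda_i(x)$ by the definition of $\lambda_i$ in Lemma~\ref{Bachi}, one gets the clean identity $c(i,x)=\lambda_i(x)^{-*}*x$. Because $J*K=J\circ K$ is a product of normal subgroups of $(A,\circ)$, every $x\in J*K$ can be written $x=j\circ k$ with $j\in J$, $k\in K$. Using that $\lambda_j$ is a $*$-automorphism and that $\lambda\colon(A,\circ)\to\Aut(A,*)$ is a group morphism (so $\lambda_i\lambda_j=\lambda_{i\circ j}$), I expand $\lambda_i(j\circ k)=\lambda_i(j*\lambda_j(k))=\lambda_i(j)*\lambda_{i\circ j}(k)$ and $j\circ k=j*\lambda_j(k)$ to obtain
$$c(i,j\circ k)=\lambda_{i\circ j}(k)^{-*}*\bigl(\lambda_i(j)^{-*}*j\bigr)*\lambda_j(k)=c(i,\lambda_j(k))*\bigl(\lambda_j(k)^{-*}*c(i,j)*\lambda_j(k)\bigr).$$
In the rightmost expression, the factor $c(i,\lambda_j(k))$ is a mixed generator of $M$, because ideals are $\lambda$-invariant, so $\lambda_j(k)\in K$; and the factor $\lambda_j(k)^{-*}*c(i,j)*\lambda_j(k)$ is a $*$-conjugate of $c(i,j)\in L$, hence lies in $L$ by normality of $L$ in $(A,*)$. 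Therefore $c(i,j\circ k)\in M*L=L*M$.

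Having placed every generator of $[I,J*K]$ inside the ideal $L*M$, the ideal they generate is contained in $L*M$, which together with the first paragraph yields equality. I anticipate the bookkeeping in the displayed identity to be the delicate point: one must track which $\lambda$-index ($i$ versus $i\circ j$) appears and invoke $\lambda$-invariance of $K$ and $*$-normality of $L$ at exactly the right moments; everything else reduces to the classical group facts recorded above.
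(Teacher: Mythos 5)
Your proof is correct, but for the hard inclusion $[I,J*K]\subseteq[I,J]*[I,K]$ it takes a genuinely different route from the paper's. Both arguments open the same way (the easy inclusion via $J,K\subseteq J*K$, and the classical identity $[I,JK]=[I,J][I,K]$ for normal subgroups to dispose of the two group-commutator families of generators), but the paper then handles the mixed generators by an implicit reduction modulo $[I,J]*[I,K]$: in the quotient one may assume $[I,J]=[I,K]=1$, restate $i*j=i\circ j$ as ``$J$ lies in the kernel of $\lambda|^I\colon(A,\circ)\to\Aut(I,*)$,'' and conclude that $J\circ K$ lies in that kernel because a kernel is a $\circ$-subgroup. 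You instead stay inside $A$ and prove the explicit cocycle-style identity $c(i,j\circ k)=c(i,\lambda_j(k))*\bigl(\lambda_j(k)^{-*}*c(i,j)*\lambda_j(k)\bigr)$ for $c(i,x)=\lambda_i(x)^{-*}*x$, which exhibits each mixed generator of $[I,J*K]$ as a product of a mixed generator of $[I,K]$ (legitimate because ideals are $\lambda$-invariant, so $\lambda_j(k)\in K$) and a $*$-conjugate of a mixed generator of $[I,J]$ (legitimate because $[I,J]$ is normal in $(A,*)$); I checked the expansion of $\lambda_i(j\circ k)$ and the regrouping, and they are correct. The paper's reduction buys brevity; your identity buys the absence of any passage to a quotient and of the abelianness hypothesis the paper invokes somewhat informally, and it shows concretely where each generator lands. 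Two small repairs: the equality $M*L=L*M$ that you attribute to the lemma $[I,J]=[J,I]$ is really just the standard fact that the set-product of two normal subgroups of $(A,*)$ is independent of the order (equivalently, that $L*M=L\vee M$); and you should say explicitly that $L*M$ is an ideal --- guaranteed by $L*M=L\vee M$ in the lattice of ideals --- so that containing all generators of $[I,J*K]$ indeed forces $[I,J*K]\subseteq L*M$.
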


\begin{proof} From $J*K\supseteq J,K$, it follows that $[I,J*K]\supseteq [I,J],[I,K]$. Therefore $[I,J*K]\supseteq [I,J]*[I,K]= [I,J]\circ [I,K]$. Now it is known that the equality $[I,JK]=[I,J][I,K]$ holds for normal subgroups $I,J,K$ of a group $G$. Hence, to conclude the proof, it suffices to prove that if $(A,*,\circ)$ is a brace in which both the groups $(A,*)$ and $(A,\circ)$ are abelian, and $i*j=i\circ j$, $i*k=i\circ k$ for every $i\in I$, $j\in J$ and $k\in K$, then $i*(j\circ k)=i\circ(j\circ k)$ for every $i,j,k$. Now $i*j=i\circ j$ for every $i$ and $j$ can be restated saying that $J$ is contained in the kernel of the group morphism $\lambda|^I\colon (A,\circ)\to\Aut(I,*)$. Similarly, $K$ is contained in the kernel of the group morphism $\lambda|^I$. Therefore $J\circ K$ is contained in that kernel, that is $i*x=i\circ x$ for every $x\in J\circ K=J*K$, as desired.
\end{proof}

The center of a left skew brace $A$ is the greatest ideal $Z$ of $A$ such that $[Z,A]=1$. Hence $Z=\{\, z\in A\mid a*z=z*a,\ a\circ z=z\circ a$ and $a*z=a\circ z$ for every $a\in A\,\}$. Again, notice the relation between these three conditions for elements in the center of a left skew brace and the three conditions necessary and sufficient for a semidirect product of digroups (in particular, of left skew braces) to be a direct product (Corollary~\ref{2.5}). 

\subsection{Heaps} 

The content of this subsection is taken from \cite{FacHeaps}, to which we refer for further details. A {\em heap} is a set $X$ endowed with a ternary operation $[-,-,-]\colon X\times X\times X\to X$ that is  a {\em Mal'tsev operation}, that is, $[x,x,y]=y$ and $[x,y,y]=x$ for every $x,y\in X$, and associative (i.e., 
$[[x,y,z],w,u]=[x,y,[z,w,u]]$ for every $x,y,z,w,u\in X$).

 \smallskip
 
 As usual in Universal Algebra, a mapping $f\colon (X,[-,-,-])\to (X',[-,-,-])$ between two heaps is a {\em heap homomorphism} if $$f([x,x',x''])=[f(x), f(x'), f(x'')]$$ for every $x,x',x''\in X$, and a subset $S$ of a heap is a {\em subheap} if $[x, y, z] \in S$ for every $x, y, z\in S$.

For example, fix any group $G$ and define a ternary operation $[-,-,-]$ on $G$ setting $[x,y,z]=xy^{-1}z$ for every $x,y,z\in G$.  Then $(G,[-,-,-])$ is a heap. Moreover, every non-empty heap is of this form, and there is a natural functor $(G,\cdot)\mapsto(G,[-,-,-])$ of the category of groups into the categories of heaps.  Nevertheless these two categories are not equivalent.

 \smallskip

 A subheap $S$ of a heap $X$ is a {\em normal subheap} if it is non-empty and $[[x, e, s], x, e]\in S$ for every $x\in X$ and every $e,s\in S$.

There is an onto mapping $\{\,$normal subheaps$\,\}\to \{\,$congruences$\,\}$, $S\mapsto\,\sim_S$, which  is not a bijection in general. 

More precisely, let $X$ be a heap. On the set ${\mathcal{ N}}(X)$ of all normal subheaps of $X$ define a pre-order $\preceq$ setting, for all $M,N\in {\mathcal{ N}}(X)$, $M\preceq N$ if for every $x,y\in X$ and $s\in M$ such that $[x, y,s] \in M$ there exists $t\in N$ such that $[x, y,t] \in N$. Let $\simeq$ be the equivalence relation on 
     ${\mathcal{ N}}(X)$ associated to the pre-order $\preceq$. Then the partially ordered set ${\mathcal{ N}}(X)/\!\simeq$ is order isomorphic to the partially ordered set ${\mathcal{ C}}(X)$ of all congruences of the heap $X$. 

\begin{prop}\label{boh} Let $X\ne\emptyset$ be a heap, $Y$ be a subheap of $X$, and $\omega$ a congruence on $X$. The following conditions are equivalent:

{\rm (a)} $Y$ is a set of representatives of the equivalence classes of $X$ modulo $\omega$, that is, $Y\cap[x]_\omega$ is a singleton for every $x\in X$.

{\rm (b)} There exists an idempotent heap endomorphism of $X$ whose image is $Y$ and whose kernel is $\omega$.

{\rm (c)} For every $a\in X$ and every $c\in Y$ there exist a unique element $b\in X$ and a unique element $d\in Y$ such that $a=[b,c,d]$ and $b\,\omega\, c$.

  {\rm (d)}
For every $a\in X$ and every $c\in Y$ there exist a unique element $b\in X$ and a unique element $d\in Y$ such that $a=[d,c,b]$ and $b\,\omega\, c$.

{\rm (e)} The mapping $g\colon Y\to X/\omega$, defined by $g(y)=[y]_\omega$ for every $y\in Y$, is a heap isomorphism.\end{prop}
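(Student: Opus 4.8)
The plan is to establish the equivalences in a cyclic fashion among (a), (b), (e), and then to handle (c) and (d) separately by relating them back to (a) or (b). The cleanest skeleton seems to be (a) $\Leftrightarrow$ (e), (a) $\Rightarrow$ (b), (b) $\Rightarrow$ (a), and finally (a) $\Leftrightarrow$ (c) and (a) $\Leftrightarrow$ (d) by a direct unfolding of the uniqueness conditions. I would fix, once and for all, a basepoint: for a non-empty heap one may write everything through the group $(G,\cdot)$ with $[x,y,z]=xy^{-1}z$ guaranteed by the representation theorem recalled just before the statement, but I would prefer to argue intrinsically with the Mal'tsev identities $[x,x,y]=y$ and $[x,y,y]=x$ to keep the proof variety-theoretic.

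First I would prove (a) $\Leftrightarrow$ (e). The map $g\colon Y\to X/\omega$, $y\mapsto[y]_\omega$, is always a heap homomorphism because $\omega$ is a congruence and $Y$ is a subheap; condition (a) says exactly that each $\omega$-class meets $Y$ in a single point, which is precisely surjectivity plus injectivity of $g$. So the equivalence is essentially a reformulation, the only thing to check being that $g$ respects the ternary operation, which is immediate. Next, for (a) $\Rightarrow$ (b), I would define $e\colon X\to X$ by sending $x$ to the unique element of $Y\cap[x]_\omega$; this is well defined by (a). One must verify that $e$ is a heap homomorphism: given $x_1,x_2,x_3$, the element $[e(x_1),e(x_2),e(x_3)]$ lies in $Y$ (as $Y$ is a subheap) and is $\omega$-congruent to $[x_1,x_2,x_3]$ (as $\omega$ is a congruence and $e(x_i)\,\omega\,x_i$), hence by uniqueness equals $e([x_1,x_2,x_3])$. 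Idempotence is clear since $e(x)\in Y$ forces $e(e(x))=e(x)$, the image is $Y$, and the kernel congruence of $e$ is exactly $\omega$ because $e(x)=e(x')$ iff $x,x'$ lie in the same $\omega$-class. The converse (b) $\Rightarrow$ (a) is the easy direction: given an idempotent endomorphism $e$ with image $Y$ and kernel $\omega$, each class $[x]_\omega$ contains $e(x)\in Y$, and any two elements of $Y$ in the same class have equal image under $e$ but are fixed by $e$, hence coincide.

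The main work, and the step I expect to be the principal obstacle, is the equivalence of the ``unique factorization'' conditions (c) and (d) with the others. Here the Mal'tsev structure must be used carefully. For (a) $\Rightarrow$ (c): fix $a\in X$ and $c\in Y$, set $b:=e(a)$ where $e$ is the endomorphism from (b) (already available from (a)), and solve $a=[b,c,d]$ for $d$; using the heap identities one gets $d=[c,b,a]$, and one must check $d\in Y$ and that the pair is the unique one with $b\,\omega\,c$. The congruence condition $b\,\omega\,c$ is what pins down $b$ as the representative $e(a)$, since $b$ must be $\omega$-related to $a$ (because $d\in Y$ forces $d\,\omega\,c$, and then the relation $a=[b,c,d]$ with the congruence forces $a\,\omega\,b$). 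Proving uniqueness of the pair $(b,d)$ is where the associativity and cancellation built into the Mal'tsev identities are indispensable, and the delicate point is verifying $d=[c,b,a]\in Y$, which should follow because $[c,b,a]$ is $\omega$-congruent to $[c,c,c]=c\in Y$ — wait, one must instead argue $[c,b,a]\,\omega\,[c,b,b]=c$ using $a\,\omega\,b$, so $d$ lies in the same class as $c$ and hence, by (a), $d$ is forced, and membership in $Y$ follows from choosing the representative. Condition (d) is the mirror image, solved by the symmetric identity $d=[a,b,c]$, and I would prove it by the same argument with the roles of the two outer entries swapped (exploiting that heaps admit the opposite operation $[x,y,z]\mapsto[z,y,x]$, which is again a heap operation). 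For the reverse directions (c) $\Rightarrow$ (a) and (d) $\Rightarrow$ (a), I would specialize: taking $a$ arbitrary and $c$ ranging over $Y$, the existence-and-uniqueness in (c) yields that each class meets $Y$ in exactly one point, recovering (a). The one genuine subtlety throughout is keeping track of \emph{which} variable the congruence condition constrains, so that the uniqueness clauses in (c) and (d) translate exactly into the single-representative property of (a) rather than something weaker.
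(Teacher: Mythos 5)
The paper does not actually prove Proposition~\ref{boh} here (the whole subsection is quoted from \cite{FacHeaps}); the closest in-paper argument is the proof of Theorem~\ref{ciy}, and your treatment of (a)$\Leftrightarrow$(b)$\Leftrightarrow$(e) is correct and essentially identical to it: the representative map $e(x):=$ the unique element of $Y\cap[x]_\omega$ is a homomorphism because $[e(x_1),e(x_2),e(x_3)]$ lies in $Y$ and is $\omega$-congruent to $[x_1,x_2,x_3]$, etc. That part is fine.

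The gap is in your (a)$\Rightarrow$(c) construction: you have the roles of $b$ and $d$ swapped. Condition (c) requires $a=[b,c,d]$ with $d\in Y$ and $b\,\omega\,c$. From such a factorization one gets $a=[b,c,d]\,\omega\,[c,c,d]=d$, so it is $d$, the \emph{right-hand} factor, that must be the representative $e(a)\in Y\cap[a]_\omega$; then $b=[a,d,c]=[a,e(a),c]$, which satisfies $b=[a,e(a),c]\,\omega\,[a,a,c]=c$. Your assignment $b:=e(a)$, $d:=[c,b,a]$ instead produces a factorization with $b\in Y$ and $d\,\omega\,c$ — you even verify exactly that ($[c,b,a]\,\omega\,[c,b,b]=c$) — but that is a different condition, of the opposite ``handedness'' from (c), and there is no reason for your $d=[c,e(a),a]$ to lie in $Y$; your attempt to rescue this by saying membership in $Y$ ``follows from choosing the representative'' does not work, since $d$ is a specific element forced by the heap identities, not something you are free to choose. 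The fix is purely bookkeeping (set $d:=e(a)$, $b:=[a,d,c]$, and check $[b,c,d]=[[a,d,c],c,d]=[a,d,[c,c,d]]=a$; uniqueness then follows since any admissible $d$ is $\omega$-congruent to $a$ and lies in $Y$, hence equals $e(a)$, and $b$ is determined by $b=[a,d,c]$), and the mirror argument handles (d); but as written the step fails, which is exactly the ``which variable does the congruence constrain'' subtlety you flagged at the end and then tripped over. A minor additional point: (c) and (d) are vacuously true when $Y=\emptyset$, so your (c)$\Rightarrow$(a) direction implicitly needs $Y\neq\emptyset$.
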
 

If the equivalent conditions of Proposition~\ref{boh} are satisfied, the heap $X$ is called the semidirect product of $\omega$ and $Y$, denoted $X=\omega\rtimes Y$.

\begin{prop}\label{vhl} Let $(X,[-,-,-])$ be a heap,  $f$ be an idempotent heap endomorphism on $X$. Let $\omega$ be the kernel of $f$, so that $\omega$ is a congruence of the heap $X$, and let $e$ be a fixed element of $Y:=f(X)$. Then $[e]_\omega=f^{-1}(e)$ and there is an action, i.e., a heap homomorphism, $$\alpha\colon (Y,[-,-,-])\to {\rm{Aut}}_{\mathsf{Heap}}([e]_\omega),$$ defined by $\alpha_y(k)=[y,e,[k,y,e]]$ for every $y\in Y$ and $k\in [e]_\omega$. Moreover, $\alpha_e$ is the identity automorphism of $[e]_\omega$.
\end{prop}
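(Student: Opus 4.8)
The plan is to trivialize the ternary computations by fixing $e$ as a basepoint and passing to the associated group. First I would settle the easy equality $[e]_\omega=f^{-1}(e)$: since $e\in Y=f(X)$ and $f$ is idempotent, $f(e)=e$, so the class of $e$ under the kernel congruence $\omega$ (the set of elements with the same $f$-image as $e$) is exactly $\{\,x\in X\mid f(x)=e\,\}=f^{-1}(e)$. Write $K:=[e]_\omega=f^{-1}(e)$, a subheap containing $e$.

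Next I would use the heaps--groups correspondence recalled in this subsection: with $e$ as identity, $X$ becomes a group $(X,\cdot)$ via $a\cdot b=[a,e,b]$, $a^{-1}=[e,a,e]$, and the heap operation is recovered as $[a,b,c]=a\cdot b^{-1}\cdot c$. Because $f$ is a heap endomorphism fixing $e$, it is an idempotent group endomorphism of $(X,\cdot)$; hence $K=f^{-1}(e)$ is a normal subgroup and $Y=f(X)$ is a subgroup, both containing $e$. The decisive computation is then $\alpha_y(k)=[y,e,[k,y,e]]=y\cdot k\cdot y^{-1}$, i.e. $\alpha_y$ is conjugation by $y$. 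Normality of $K$ gives immediately that $\alpha_y$ maps $K$ into $K$; it fixes $e$, and restricted to $(K,\cdot)$ it is a group homomorphism, hence a heap endomorphism of $K$ by the same correspondence; and it is bijective with two-sided inverse $\alpha_{y^{-1}}$, where $y^{-1}\in Y$. Thus each $\alpha_y$ is a heap automorphism of $K$, and $\alpha_e$, being conjugation by the identity, is $\id_K$.

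Finally I would check that $\alpha$ is a heap homomorphism into $\Aut_{\mathsf{Heap}}(K)$, the latter viewed as a heap through its composition group, so that $[\sigma,\tau,\rho]=\sigma\circ\tau^{-1}\circ\rho$. Conjugation $y\mapsto\alpha_y$ is a group homomorphism from $(Y,\cdot)$ to $\Aut_\Gp(K,\cdot)$, so $\alpha_{y\cdot y'}=\alpha_y\circ\alpha_{y'}$ and $\alpha_{y^{-1}}=\alpha_y^{-1}$; combining this with $[y,y',y'']=y\cdot(y')^{-1}\cdot y''$ gives $\alpha_{[y,y',y'']}=\alpha_y\circ\alpha_{y'}^{-1}\circ\alpha_{y''}=[\alpha_y,\alpha_{y'},\alpha_{y''}]$, which is precisely the heap-homomorphism condition.

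The individual verifications are routine once the group picture is installed; the only genuinely delicate point --- and the main obstacle if one refuses the group detour --- is establishing $[y,e,[k,y,e]]=y\cdot k\cdot y^{-1}$ and, equivalently, checking straight from the Mal'tsev and associativity axioms that $\alpha_y$ preserves $K$ and commutes with $[-,-,-]$. Passing through $(X,\cdot)$ is exactly what renders these transparent, and one must also note that conjugations fix $e$, so that every $\alpha_y$ lands in the basepoint-preserving heap automorphisms, i.e. in $\Aut_\Gp(K,\cdot)$; this is what makes the heap structure on $\Aut_{\mathsf{Heap}}(K)$ interact correctly with conjugation.
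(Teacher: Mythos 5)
Your argument is correct: the identification $[e]_\omega=f^{-1}(e)$, the reduction to the group $(X,\cdot)$ with $a\cdot b=[a,e,b]$, the computation $[y,e,[k,y,e]]=yky^{-1}$, and the verification that conjugation gives a heap homomorphism into $\bigl(\Aut_{\mathsf{Heap}}([e]_\omega),\,[\sigma,\tau,\rho]=\sigma\circ\tau^{-1}\circ\rho\bigr)$ are all sound. The paper itself states this proposition without proof (it is imported from the reference [FacHeaps]), so there is no in-text argument to compare against; your route through the heap--group correspondence, which the paper explicitly recalls in the same subsection, is the natural one and matches the machinery the authors set up.
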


The operation $[-,-,-]$ on ${\rm{Aut}}_{\mathsf{Heap}}([e]_\omega)$ is defined by $[f,g,h]=f\circ g^{-1}\circ h$ for every $f,g,h\in {\rm{Aut}}_{\mathsf{Heap}}([e]_\omega)$. 

\smallskip

We are now ready to present the outer semidirect product of heaps:

\begin{prop} Let $K$ and $Y$ be heaps. Let $\alpha\colon Y\to {\rm{Aut}}_{\mathsf{Heap}}(K),$ $\alpha\colon b\in Y\mapsto \alpha_b$, be a heap morphism, and let $y$ be an element of $Y$ that is mapped by $\alpha$ to the identity automorphism $\alpha_y$ of $K$. On the cartesian product $K\times Y$ define a ternary operation $[-,-,-]$ setting $$[(k_1,y_1),(k_2,y_2),(k_3,y_3)]:=([k_1,\alpha_{[y_1,y_2,y]}(k_2), \alpha_{[y_1,y_2,y]}(k_3)], [y_1,y_2,y_3])$$ for every $(k_1,y_1),(k_2,y_2),(k_3,y_3)\in K\times Y$. Then:

{\rm (a)} $K\times Y$ is a heap and $K\times\{y\}$ is a normal subheap of $K\times Y$ isomorphic to~$K$. 

{\rm (b)} For every element $k\in K$, $\{k\}\times Y$ is a subheap of $K\times Y$ isomorphic to $Y$ and the mapping $f\colon K\times Y\to K\times Y$ defined by $f(a,b)=(k,b)$ for every $(a,b)\in K\times Y$ is an idempotent heap endomorphism of $K\times Y$.
\end{prop}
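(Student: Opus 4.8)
The plan is to read everything off the defining formula, using two consequences of the hypotheses that I would isolate at the outset. Since $\alpha\colon Y\to\mathrm{Aut}_{\mathsf{Heap}}(K)$ is a heap morphism and the operation on automorphisms is $[f,g,h]=f\circ g^{-1}\circ h$, one has $\alpha_{[y_1,y_2,y_3]}=\alpha_{y_1}\circ\alpha_{y_2}^{-1}\circ\alpha_{y_3}$ for all $y_1,y_2,y_3\in Y$; and since $\alpha_y=\id_K$, this specialises to $\alpha_{[y_1,y_2,y]}=\alpha_{y_1}\circ\alpha_{y_2}^{-1}$. These two identities, together with the Mal'tsev laws $[t,t,s]=s$ and $[s,t,t]=s$ in both $K$ and $Y$, drive all the computations.

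For (a) I would first verify the two Mal'tsev identities on $K\times Y$. In $[(k_1,y_1),(k_1,y_1),(k_3,y_3)]$ the automorphism index is $[y_1,y_1,y]=y$, so $\alpha_{[y_1,y_1,y]}=\id_K$ and both coordinates collapse by the Mal'tsev laws in $K$ and $Y$ to $(k_3,y_3)$; in $[(k_1,y_1),(k_3,y_3),(k_3,y_3)]$ the equal second and third entries force the $K$-coordinate to $k_1$ via $[s,t,t]=s$ regardless of the automorphism, while the $Y$-coordinate is $[y_1,y_3,y_3]=y_1$. The crux is associativity. Expanding $[[z_1,z_2,z_3],z_4,z_5]$ and $[z_1,z_2,[z_3,z_4,z_5]]$ with $z_i=(k_i,y_i)$, the $Y$-coordinates agree by associativity in $Y$. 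For the $K$-coordinates, write $p=[y_1,y_2,y]$ and $q=[y_3,y_4,y]$. On the right-hand side I would push the heap automorphism $\alpha_p$ through the inner bracket, and on the left-hand side I would apply the heap associativity axiom in $K$; both sides then reduce to the single expression $[k_1,\alpha_p(k_2),[\alpha_p(k_3),\ast,\ast]]$, and equality comes down to the automorphism identity $\alpha_{[y_1,y_2,[y_3,y_4,y]]}=\alpha_p\circ\alpha_q$. This last identity is exactly where the morphism property and $\alpha_y=\id_K$ are used: both sides equal $\alpha_{y_1}\circ\alpha_{y_2}^{-1}\circ\alpha_{y_3}\circ\alpha_{y_4}^{-1}$. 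Finally, $K\times\{y\}$ is a subheap because the index $[y,y,y]=y$ makes the product of three of its elements equal $([k_1,k_2,k_3],y)$, so $k\mapsto(k,y)$ is a heap isomorphism onto it; normality follows by substituting $x=(k,y')$, $e=(k_e,y)$, $s=(k_s,y)$ into $[[x,e,s],x,e]$ and noting that the indices $[y',y,y]=y'$ and $[y',y',y]=y$ force the final $Y$-coordinate to be $y$.

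For (b), the same collapse shows $\{k\}\times Y$ is a subheap: in $[(k,y_1),(k,y_2),(k,y_3)]$ the repeated first coordinate forces the $K$-coordinate to $k$ by $[s,t,t]=s$, giving $(k,[y_1,y_2,y_3])$, so $y'\mapsto(k,y')$ is a heap isomorphism $Y\to\{k\}\times Y$. The map $f(a,b)=(k,b)$ is then the projection $K\times Y\to Y$ followed by this isomorphism, hence a heap endomorphism with image $\{k\}\times Y$, and $f(f(a,b))=f(k,b)=(k,b)=f(a,b)$ gives idempotency. I expect the only real work to be the associativity bookkeeping in (a), everything else being a direct collapse via the Mal'tsev laws.
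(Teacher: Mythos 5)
Your proof is correct and complete. The paper itself does not prove this proposition (it is quoted from the reference [FacHeaps], ``Heaps and trusses''), so there is nothing to compare against; your verification is the natural direct one, and the two identities you isolate at the start --- $\alpha_{[y_1,y_2,y_3]}=\alpha_{y_1}\circ\alpha_{y_2}^{-1}\circ\alpha_{y_3}$ from the morphism property and its specialisation $\alpha_{[y_1,y_2,y]}=\alpha_{y_1}\circ\alpha_{y_2}^{-1}$ --- are exactly what makes the associativity computation close up, via $\alpha_p\circ\alpha_q=\alpha_{y_1}\circ\alpha_{y_2}^{-1}\circ\alpha_{y_3}\circ\alpha_{y_4}^{-1}=\alpha_{[[y_1,y_2,y_3],y_4,y]}$ after pushing the heap automorphism $\alpha_p$ through the inner bracket.
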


It is clear how to define the (direct) product of two heaps. 

\begin{prop}\label{5.5} Let $X$ be a heap, $\omega$ be a congruence on $X$, $Y$ be a subheap of $X$, and suppose $X=\omega\rtimes Y$. Let $f$ be the corresponding idempotent heap endomorphism of $X$. Fix an element $e\in Y$ and set $K:=[e]_\omega$. 
Then the following conditions are equivalent:

{\rm (a)} The subheap $Y$ of $X$ is normal.
    
{\rm (b)} $X= K\times Y$ (direct product of heaps).

{\rm (c)} 
$[y,e,k]=[k,e,y]$ for every $y\in Y$ and every $k\in K$. 

{\rm (d)} There is an idempotent heap endomorphism $g$ of $X$ whose image is $K$ and such that $g^{-1}(e)=Y$.

{\rm (e)} The heap morphism $\alpha\colon Y\to {\rm{Aut}}_{\mathsf{Heap}}(K)$ of Proposition~{\rm \ref{vhl}} is constantly equal to the identity mapping of $K$.
\end{prop}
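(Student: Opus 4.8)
The plan is to fix the basepoint $e\in Y$ and transport the whole problem into the group associated with the heap $X$ at this point, namely $(X,\cdot\,)$ with $x\cdot z:=[x,e,z]$, identity $e$, inverse $x^{-1}:=[e,x,e]$, and heap operation recovered as $[x,y,z]=x\cdot y^{-1}\cdot z$. Since $e\in Y=f(X)$ and $f$ is idempotent we have $f(e)=e$, so $f$ is a heap endomorphism fixing the basepoint and hence a group endomorphism of $(X,\cdot\,)$; it is idempotent, its image is the subgroup $Y$, and its kernel (in the group sense) is $f^{-1}(e)=[e]_\omega=K$ by Proposition~\ref{vhl}. Thus $K$ is a normal subgroup, $Y$ is a subgroup, and $(X,\cdot\,)$ is an inner semidirect product $X=K\rtimes Y$ of groups. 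Once this dictionary is in place, I would check that each of the five conditions becomes a standard statement about this semidirect product.

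I would first translate the three ``pointwise'' conditions. Writing the normal-subheap requirement $[[x,e',s],x,e']\in Y$ in group terms gives $x(e')^{-1}s\,x^{-1}e'$; specialising $e'=e$ shows that (a) forces $xsx^{-1}\in Y$ for all $x\in X$, $s\in Y$, i.e.\ that $Y$ is a normal subgroup, and conversely normality of $Y$ yields the full subheap condition for all $e',s\in Y$. For (c) the two sides are $[y,e,k]=yk$ and $[k,e,y]=ky$, so (c) says exactly that $K$ and $Y$ commute elementwise. For (e) a direct computation gives $\alpha_y(k)=[y,e,[k,y,e]]=yky^{-1}$, so $\alpha$ is the conjugation action of $Y$ on $K$, and (e) says precisely that this action is trivial, which is again elementwise commutation of $K$ and $Y$.

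It remains to handle (b) and (d) and to close the loop. For (d), any heap endomorphism $g$ with $g^{-1}(e)=Y$ fixes $e$ (as $e\in K=\operatorname{im} g$ and $g$ is idempotent), hence is a group endomorphism whose group kernel is $Y$, forcing $Y$ normal; conversely, once $Y$ is normal one has a direct product of groups and may take $g$ to be the projection onto $K$. For (b), under the canonical bijection $X\to K\times Y$ from the semidirect decomposition the basepoint $e$ corresponds to the pair of identities, and the heap direct product $K\times Y$ has associated group (at that basepoint) equal to the direct product of the groups; thus $X$ equals the direct-product heap exactly when the conjugation action is trivial, since then $[x_1,x_2,x_3]=x_1x_2^{-1}x_3$ is computed componentwise. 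Finally I would invoke the standard group-theoretic fact that, for an inner semidirect product $X=K\rtimes Y$, the subgroup $Y$ is normal if and only if the conjugation action is trivial, if and only if $K$ and $Y$ commute elementwise, if and only if $X=K\times Y$ is a direct product; this chains (a), (c), (e), (b) together, and with the argument for (d) gives all five equivalences. I expect the main obstacle to be condition~(b): one has to be precise about what ``$X=K\times Y$ as heaps'' means, namely via the canonical bijection coming from the semidirect decomposition, and verify that the heap direct product corresponds to the group direct product rather than to the generally nontrivial semidirect heap.
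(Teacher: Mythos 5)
Your argument is correct and complete in outline. The paper itself states Proposition~5.5 without proof (the whole heap subsection is imported from the reference on heaps and trusses), so there is no in-paper proof to compare against; but your strategy --- retract the heap to the group $(X,\cdot)$ at the basepoint $e$ via $x\cdot z=[x,e,z]$, observe that $f$ fixes $e$ and hence is an idempotent group endomorphism with image $Y$ and kernel $K=f^{-1}(e)$, and then read all five conditions as statements about the inner semidirect product of groups $X=K\rtimes Y$ --- is the standard and, as far as one can tell, intended route. All the translations check out: specialising the normal-subheap condition at $e'=e$ gives $xsx^{-1}\in Y$ and the converse follows since $Y$ is a subgroup, $[y,e,k]=yk$ and $[k,e,y]=ky$ so (c) is elementwise commutation, $\alpha_y(k)=yky^{-1}$ so (e) is triviality of the conjugation action, and in (d) any such $g$ fixes $e$ and so has group kernel $g^{-1}(e)=Y$, forcing normality. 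The one genuinely delicate point, the meaning of ``$X=K\times Y$ as heaps'' in (b), you identify and resolve correctly: the canonical bijection $(k,y)\mapsto ky$ carries the componentwise heap structure to $X$ exactly when the group is the internal direct product. Nothing is missing.
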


In particular, if the equivalent conditions of the previous Proposition~\ref{5.5} hold, then the commutator $[\sim_Y,\omega]$ is the conguence $=$ on $X$. 

\subsection{Left near-trusses}

A {\em left near-truss} $(X,[-,-,-],\cdot{})$ is a set $X$ endowed with a ternary operation $[-,-,-]$ and a binary operation $\cdot$, such that $(X,[-,-,-])$ is a heap, $(X,\cdot{})$ is a semigroup, and {\em  left distributivity} holds, that is, $$x\cdot[y,z,w]=[x\cdot y,x\cdot z,x\cdot w]$$  for every $x,y,z,w\in X$.  Similarly for {\em right near-trusses}, where left distributivity is replaced by {\em right distributivity}: $[y,z,w]\cdot x=[y\cdot x,z\cdot x,w\cdot x]$ for every $x,y,z,w\in X$.  Clearly, the category of  left near-trusses is isomorphic to the category of right near-trusses, it suffices to associate to any left near-truss $(X,[-,-,-],\cdot{})$ its opposite right near-truss $(X,[-,-,-],\cdot{}^{\op})$.

\begin{prop}\label{boh'} Let $X\ne\emptyset$ be a left near-truss, $Y$ be a subnear-truss of $X$, and $\omega$ a congruence on the left near-truss $X$. The following conditions are equivalent:

{\rm (a)} $Y$ is a set of representatives of the equivalence classes of $X$ modulo $\omega$ (i.e., $Y\cap[x]_\omega$ is a singleton for every $x\in X$).

{\rm (b)} There exists an idempotent left near-truss endomorphism of $X$ whose image is $Y$ and whose kernel is $\omega$.

{\rm (c)} For every $a\in X$ and every $c\in Y$ there exist a unique element $b\in X$ and a unique element $d\in Y$ such that $a=[d,c,b]$ and $b\,\omega\, c$.

{\rm (d)} The mapping $g\colon Y\to X/\omega$, defined by $g(y)=[y]_\omega$ for every $y\in Y$, is a left near-truss isomorphism.
\end{prop}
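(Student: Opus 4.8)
The plan is to reduce everything to the heap case already settled in Proposition~\ref{boh} and then to check, separately, that the maps produced there automatically respect the multiplication. Forgetting the binary operation $\cdot$, a left near-truss becomes a heap, a subnear-truss becomes a subheap, and a near-truss congruence becomes a heap congruence; moreover condition (a) here is verbatim condition (a) of Proposition~\ref{boh}, condition (c) here is verbatim condition (d) there, while conditions (b) and (d) here are the near-truss strengthenings of conditions (b) and (e) there. Thus Proposition~\ref{boh}, applied to the underlying heap of $X$, already yields the equivalence of (a), (c), and the heap versions of (b) and (d); all that remains is to promote those two heap maps to near-truss maps. I would arrange the argument as the cycle (a) $\Rightarrow$ (b) $\Rightarrow$ (d) $\Rightarrow$ (c) $\Rightarrow$ (a), with the heap-level steps supplied by Proposition~\ref{boh} and only the multiplicative upgrades done by hand.

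For (a) $\Rightarrow$ (b): Proposition~\ref{boh} furnishes the idempotent heap endomorphism $f\colon X\to X$ sending each $x$ to the unique element of $Y\cap[x]_\omega$, with image $Y$ and kernel $\omega$. I would then verify that $f$ preserves $\cdot$ as well. Given $x,x'\in X$, the element $f(x)\cdot f(x')$ lies in $Y$ because $Y$ is a subnear-truss, and it is $\omega$-related to $x\cdot x'$ because $f(x)\,\omega\,x$, $f(x')\,\omega\,x'$ and $\omega$ is a congruence for $\cdot$; by uniqueness of the representative in $Y\cap[x\cdot x']_\omega$ it must coincide with $f(x\cdot x')$. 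Hence $f$ is an idempotent near-truss endomorphism with image $Y$ and kernel $\omega$.

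For (b) $\Rightarrow$ (d): a near-truss idempotent endomorphism is in particular a heap one, so Proposition~\ref{boh} (implication (b) $\Rightarrow$ (e) there) gives that $g\colon Y\to X/\omega$, $y\mapsto[y]_\omega$, is a heap isomorphism; since $g$ is the restriction of the near-truss quotient map $X\to X/\omega$, which preserves $\cdot$, the map $g$ also preserves $\cdot$ and is therefore a near-truss isomorphism. For (d) $\Rightarrow$ (c) and (c) $\Rightarrow$ (a): a near-truss isomorphism is in particular a heap isomorphism, and condition (c) here does not mention multiplication, so Proposition~\ref{boh} (implications (e) $\Rightarrow$ (d) and (d) $\Rightarrow$ (a) there) yields condition (c) and then condition (a) of the present statement. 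This closes the cycle.

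I do not expect a genuine obstacle: the whole point is that no new multiplicative condition is needed beyond those already visible in the heap picture. The one place that demands a little care is the verification that $f$ (equivalently $g$) respects $\cdot$, and the subtlety there is purely bookkeeping, using closure of $Y$ under multiplication together with the fact that $\omega$ is a congruence for $\cdot$. Notably, left distributivity itself is not invoked in these compatibility checks; it is needed only to guarantee that $X/\omega$ and the relevant substructures are again left near-trusses, so that the statement makes sense.
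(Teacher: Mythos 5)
Your proof is correct. Note that the paper does not actually supply a proof of Proposition~\ref{boh'}: it is stated without argument, being an instance of the general Theorem~\ref{ciy} (which gives the equivalence of (a), (b), (d) for any algebra in any variety) together with the heap-specific condition (c) inherited from Proposition~\ref{boh}. Your key verification --- that $f(x)\cdot f(x')$ lies in $Y$, is $\omega$-related to $x\cdot x'$ because $\omega$ is a congruence for $\cdot$, and hence equals $f(x\cdot x')$ by uniqueness of the representative in $Y\cap[x\cdot x']_\omega$ --- is exactly the argument the paper gives for Theorem~\ref{ciy}(a)$\Rightarrow$(b), applied there to an arbitrary $n$-ary operation at once rather than to $\cdot$ alone; so your route through the heap case is a sound, slightly more piecemeal rendering of the same idea, and your closing remark about where left distributivity is (and is not) needed is accurate.
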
 

\section{Inner semidirect product in Universal Algebra}\label{3}

With all the previous examples in mind, it is now clear what must be defined as inner semidirect product in Universal Algebra. Fix a variety $\Cal V$ of type $\Cal F$.

\begin{teo}\label{ciy} Let $A$ be an algebra, $B$ be a subalgebra of $A$, and $\omega$ be a congruence on $A$. The following conditions are equivalent:

{\rm (a)} $B$ is a set of representatives for the elements of $A$ modulo $\omega$, that is, for every $a\in A$ the intersection $[a]_\omega \cap B$ is a singleton.

{\rm (b)} There is an idempotent endomorphism of $A$ whose image is $B$ and whose kernel is $\omega$.

{\rm (c)} There is a surjective homomorphism $ A\to B$ that is the identity on $B$ and whose kernel is $\omega$.

{\rm (d)} The canonical mapping $B\to A/\omega$, $b\mapsto [b]_\omega$, is an isomorphism.\end{teo}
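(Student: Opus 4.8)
The plan is to close the cyclic chain $(a)\Rightarrow(b)\Rightarrow(c)\Rightarrow(d)\Rightarrow(a)$, which yields all the equivalences at once. The three cases already settled (Propositions~\ref{0.1}, \ref{boh} and~\ref{boh'}) are instances of this statement, and the general argument simply abstracts the two structural facts that were used in each of them: that $B$ is closed under the operations and that $\omega$ is compatible with them. The one implication carrying real content is $(a)\Rightarrow(b)$; everything else is formal bookkeeping with the homomorphism theorem.

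For $(a)\Rightarrow(b)$, I would exploit (a) to define a map: for each $a\in A$ let $e(a)$ be the unique element of $[a]_\omega\cap B$, which is well defined precisely by (a). At once $e(a)\in B$, $e(a)\mathrel{\omega}a$, and $e(b)=b$ for $b\in B$ (since $b\in[b]_\omega\cap B$); hence $e(A)=B$ and $e^2=e$. The kernel is $\omega$: if $a\mathrel{\omega}a'$ then $[a]_\omega=[a']_\omega$ gives $e(a)=e(a')$, and conversely $e(a)=e(a')$ gives $a\mathrel{\omega}e(a)=e(a')\mathrel{\omega}a'$. The hard part, and the only place the hypotheses on $B$ and $\omega$ are genuinely needed, is showing that $e$ is a homomorphism. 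For an $n$-ary $f\in\Cal F$ and $a_1,\dots,a_n\in A$, the element $f(e(a_1),\dots,e(a_n))$ lies in $B$ because $B$ is a subalgebra, and it is $\omega$-related to $f(a_1,\dots,a_n)$ because $\omega$ is a congruence and $e(a_i)\mathrel{\omega}a_i$; by the uniqueness in (a) it must therefore coincide with $e(f(a_1,\dots,a_n))$, which is exactly the homomorphism identity.

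The next two steps are routine. For $(b)\Rightarrow(c)$, I would corestrict $e$ to its image: since $e^2=e$ and $e(A)=B$, the map $p\colon A\to B$, $p(a)=e(a)$, is a surjective homomorphism, is the identity on $B$, and has kernel $\omega$. For $(c)\Rightarrow(d)$, I would invoke the homomorphism theorem: a surjective $p\colon A\to B$ with $\ker p=\omega$ induces an isomorphism $\bar p\colon A/\omega\to B$, $[a]_\omega\mapsto p(a)$. Because $p$ is the identity on $B$ (so $p(a)\mathrel{\omega}a$ for all $a$), one checks directly that the canonical map $g\colon B\to A/\omega$, $b\mapsto[b]_\omega$, satisfies $g\circ\bar p=\id$ and $\bar p\circ g=\id$; thus $g=\bar p^{-1}$ is an isomorphism.

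Finally, for $(d)\Rightarrow(a)$, I would observe that $g$ is in any case a homomorphism, being the composite of the inclusion $B\hookrightarrow A$ with the quotient map $A\to A/\omega$. Surjectivity of $g$ says every class $[a]_\omega$ meets $B$, so $[a]_\omega\cap B\ne\emptyset$; injectivity of $g$ says no class meets $B$ in two distinct points. Together these give that $[a]_\omega\cap B$ is a singleton for every $a\in A$, which is (a), and the cycle is complete.
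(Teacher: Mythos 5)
Your proposal is correct and follows essentially the same route as the paper: the cyclic chain $(a)\Rightarrow(b)\Rightarrow(c)\Rightarrow(d)\Rightarrow(a)$, with the map $e(a)=$ the unique element of $[a]_\omega\cap B$ shown to be a homomorphism via the subalgebra property of $B$, the congruence property of $\omega$, and the uniqueness in (a). The only cosmetic difference is that you spell out $(d)\Rightarrow(a)$ as surjectivity plus injectivity of the canonical map, where the paper simply notes that (a) is a restatement of (d).
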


\begin{Proof}
    (a)${}\Rightarrow{}$(b) Suppose that (a) holds. Let $e\colon A\to A$ be the mapping that associates with any $a\in A$ the unique element in  $[a]_\omega \cap B$. It is clear that the image of this mapping is contained in $B$ and, conversely, if $b\in B$, then $b=e(b)$. Hence $B=e(A)$. 
    
    Let us show that the mapping $e$ is a homomorphism. Let $p\colon A^n\to A$ be any $n$-ary operation of $A$ and $a_1,\dots,a_n$ be elements of $A$. In order to show that $e(p(a_1,\dots,a_n))=p(e(a_1),\dots,e(a_n))$, notice that both $e(p(a_1,\dots,a_n))$ and $p(e(a_1),\dots,e(a_n))$ are elements of $B$ because $B=e(A)$ and $B$ is a subalgebra of $A$. Hence, in order to show that they are equal, it suffices to show that $e(p(a_1,\dots,a_n))\,\omega\, p(e(a_1),\dots,e(a_n))$. Clearly $e(a)\,\omega\, a$ for every $a\in A$, so that\linebreak $e(p(a_1,\dots,a_n))\,\omega\, p(a_1,\dots,a_n)\,\omega \,p(e(a_1),\dots,e(a_n))$ because $\omega$ is a congruence of $A$. This concludes the proof that $e\colon A\to A$ is a homomorphism. Finally, $e(a)\,\omega \,a$ implies that $[e(a)]_\omega= [a]_\omega$, so $[e(a)]_\omega\cap B= [a]_\omega\cap B$, whence $e(e(a))=e(a)$. Therefore $e$ is an idempotent endomorphism.

    Finally, it is clear that two elements $a,a'$ of $A$ are mapped to the same element of $B$ if and only if $[a]_\omega= [a']_\omega$, that is, if and only if $a\,\omega\, a'$. This proves that the kernel of $e$ is exactly $\omega$.

    (b)${}\Rightarrow{}$(c) If $e\colon A\to A$ is an idempotent endomorphism of $A$ whose image is $B$ and whose kernel is $\omega$, its corestriction $e|^B\colon A\to B$ is a surjective homomorphism whose kernel is still $\omega$. The mapping $e|^B$ is the identity on $B$, because if $b\in B$, there exists $a\in A$ such that $e(a)=b$. Then $b=e(a)=e(e(a))=e(b)=e|^B(b)$.

    (c)${}\Rightarrow{}$(d) If $g\colon A\to B$ is a surjective homomorphism with kernel $\omega$ that is the identity on $B$, then there is an isomorphism $\overline{g}\colon A/\omega\to B$ such that $\overline{g}([b]_\omega)=b$ for every $b\in B$. Its inverse $\overline{g}^{-1}\colon B\to A/\omega$ is the canonical mapping $B\to A/\omega$, $b\mapsto [b]_\omega$, and is an isomorphism.

    (d)${}\Rightarrow{}$(a) because (a) is just a restatement of (d).
\end{Proof}

If the equivalent conditions in the statement of Theorem~\ref{ciy} hold, we will say that $A$ is the {\em (inner) semidirect-product} of its subalgebra $B$ and its congruence $\omega$, and write $A=B \ltimes\omega$.

\begin{cor}\label{11} Given an algebra $A$ there is a one-to-one correspondence between:

{\rm (a)} The set of all idempotent endomorphisms of $A$.

{\rm (b)} The set of all pairs ($B,\omega)$, where $B$ is a subalgebra of $A$, $\omega$ is a congruence on $A$, and $A=B \ltimes\omega$.\end{cor}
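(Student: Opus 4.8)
The plan is to establish the bijection in Corollary~\ref{11} by exhibiting two mutually inverse maps between the set of idempotent endomorphisms of $A$ and the set of admissible pairs $(B,\omega)$, leaning entirely on the equivalence already proved in Theorem~\ref{ciy}. First I would define the forward map $\Phi$ sending an idempotent endomorphism $e$ to the pair $(\im(e),\ker(e))$. To see this pair lands in set (b), I would invoke Theorem~\ref{ciy}: the subalgebra $B:=\im(e)$ (it is a subalgebra because it is the image of a homomorphism) together with the congruence $\omega:=\ker(e)$ satisfies condition (b) of that theorem by construction, hence satisfies (a), which is exactly the requirement $A=B\ltimes\omega$. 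So $\Phi(e)$ is a legitimate element of (b).

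Next I would define the backward map $\Psi$ sending an admissible pair $(B,\omega)$ to an idempotent endomorphism $e$. Here the point is that the implication (a)${}\Rightarrow{}$(b) of Theorem~\ref{ciy} produces a \emph{specific} idempotent endomorphism: namely the map $e$ that sends each $a\in A$ to the unique element of $[a]_\omega\cap B$. I would set $\Psi(B,\omega)$ to be this $e$, noting that the proof of (a)${}\Rightarrow{}$(b) already verified $\im(e)=B$ and $\ker(e)=\omega$.

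With both maps in hand, the remaining task is to check $\Phi\circ\Psi=\id$ and $\Psi\circ\Phi=\id$. The composite $\Phi\circ\Psi$ is immediate: by the previous paragraph $\Psi(B,\omega)$ has image $B$ and kernel $\omega$, so $\Phi(\Psi(B,\omega))=(B,\omega)$. The genuinely load-bearing step — and the one I expect to be the main obstacle — is $\Psi\circ\Phi=\id$, i.e.\ that an arbitrary idempotent endomorphism $e$ is \emph{recovered} by the canonical construction from its pair $(\im(e),\ker(e))$. The subtlety is that $\Psi$ builds a particular map (choose the unique congruence-class representative lying in $B$), so I must show this particular map coincides with the original $e$. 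I would argue as follows: for any $a\in A$, idempotency gives $e(a)\in\im(e)=B$, while $e(a)\mathrel{\ker(e)}a$ means $e(a)\in[a]_\omega$; thus $e(a)\in[a]_\omega\cap B$, and since that intersection is a singleton by condition (a) of Theorem~\ref{ciy}, $e(a)$ must equal the unique representative, which is precisely $\Psi(\im(e),\ker(e))(a)$. Hence the two endomorphisms agree pointwise and are equal.

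Finally I would remark that no further calculation is needed, since every structural fact (that images are subalgebras, kernels are congruences, and that the representative map is a homomorphism) has already been discharged inside the proof of Theorem~\ref{ciy}; the corollary is a clean bookkeeping consequence. The only place demanding care is the pointwise identity $e=\Psi(\Phi(e))$, where one must resist assuming the canonical map agrees with $e$ and instead derive it from the singleton property together with idempotency.
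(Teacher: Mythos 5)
Your proof is correct and follows exactly the route the paper intends: Corollary~\ref{11} is stated without proof as an immediate consequence of Theorem~\ref{ciy}, the bijection being $e\mapsto(\im(e),\ker(e))$ with inverse given by the representative map constructed in the proof of (a)${}\Rightarrow{}$(b), and your singleton argument for $\Psi\circ\Phi=\id$ is the right (and only nontrivial) verification. One cosmetic slip: idempotency is what guarantees $e(a)\mathrel{\ker(e)}a$ (i.e.\ $e(e(a))=e(a)$), whereas $e(a)\in\im(e)$ holds for any endomorphism; the substance of the argument is unaffected.
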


\begin{remark}\label{ru}{\rm Assume $A=B \ltimes\omega$. By Theorem~\ref{ciy}(a), the subalgebra $B$ is a set of representatives for the congruence classes of $A$ modulo~$\omega$. Each congruence class of $A$ modulo $\omega$ is of the form $A_b:=[b]_\omega$ for a unique $b\in B$, and $A$ is the disjoint union of all its subsets $A_b$. Moreover, each $A_b$ has a privileged point $b$ (because $A_b \cap B$ is the singleton $\{b\}$), so that $(A_b,b)$ is a pointed set. For every $n$-ary operation $p\colon A^n\to A$ of $A$ and for every $b_1,\dots,b_n\in B$, one has that $p(b_1,\dots,b_n)\in B$ (this exactly means that $B$ is a subalgebra of $A$) and $p(A_{b_1}\times\dots\times A_{b_n})\subseteq A_{p(b_1,\dots,b_n)} $ (this is equivalent to ``$\omega$ is a conguence for $A$''). Notice that the restriction $A_{b_1}\times\dots\times A_{b_n}\to A_{p(b_1,\dots,b_n)} $ of $p$ is a morphism of pointed sets, that is, a morphism in the category $\Set_*$. Hence the equivalent conditions of Theorem~\ref{ciy} say that there is a partition of $A$ into pointed sets, this partition is compatible with the operations on $A$, and defines a sort of ``grading'' on $A$.}
\end{remark}

In the pairs ($B,\omega)$ relative to semidirect-product decompositions of an algebra $A$, $B$ can be any subalgebra of $A$ that is also a homomorphic image of $A$. Relatively to such a subalgebra $B$, $\omega$ can be any ``complement'' of $B$ in $A$, that is, the kernel of any homomorphism $A\to B$ that is the identity on $B$.

\medskip

By Remark~\ref{ru}, if we have a semidirect-product decomposition $A=B \ltimes\omega$, then {\em the algebra $B$ acts on the corresponding pointed partition of $A$}, in the sense that for every $n$-ary function symbol $f\in\Cal F$ and every $n$-tuple $(b_1,\dots, b_n)\in B^n$, there is a morphism
$f_{(b_1,\dots, b_n)}\colon(A_{b_1},a_{b_1})\times\dots\times(A_{b_n},a_{b_n})\to (A_{f(b_1,\dots, b_n)},a_{f(b_1,\dots, b_n)})$ 
in the category $\Set_*$. It is defined, for every $(x_1,\dots,x_n)\in A_{b_1}\times\dots\times A_{b_n}$, by  $f_{(b_1,\dots, b_n)}(x_1,\dots,x_n)=f(x_1,\dots,x_n)$ (as we have said above, each $f_{(b_1,\dots, b_n)}$ is just the operation $f$, but restricted to $A_{b_1}\times\dots\times A_{b_n}\to A_{f(b_1,\dots, b_n)}$).

\subsection{The special role of constant endomorphisms in semidirect products}

Until now we have stressed the role of idempotent endomorphisms in the study of semidirect-product decompositions of an algebra $A$. Among idempotent endomorphisms of $A$, the easiest but probably most important ones are the {\em constant endomorphisms} of $A$, that is the endomorphisms $f\colon A\to A$ such that $f(a)=f(a')$ for every $a,a'\in A$. Equivalently, the endomorphisms $f\colon A\to A$ for which there exists an element $\overline{a}\in A$ such that $f(a)=\overline{a}$ for every $a\in A$. Clearly, every constant endomorphism is an idempotent endomorphism.

\medskip

We say that an element $a$ of an algebra $A$ is {\em totally idempotent} if $p(a,a,\dots,a)=a$ for every operation $p$ of $A$.

\begin{prop} Let $A$ be an algebra. There are canonical one-to-one correspondences between the following three sets:

{\rm (a)} The set of all constant endomorphisms of $A$.

{\rm (b)} The set of all subalgebras of $A$ of cardinality one.

{\rm (c)} The set of all totally idempotent elements of $A$.
    \end{prop}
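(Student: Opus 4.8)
The plan is to route all three correspondences through a single element of $A$: a constant endomorphism is determined by its unique value, a one-element subalgebra is a singleton $\{a\}$, and a totally idempotent element is a point $a$. Thus it suffices to fix $a\in A$ and prove the equivalence of the four conditions: (i) the constant map $c_a\colon A\to A$ sending every element to $a$ is an endomorphism; (ii) $\{a\}$ is a subalgebra of $A$; (iii) $a$ is totally idempotent; (iv) $f(a,\dots,a)=a$ for every fundamental operation $f\in\Cal F$. Once this is established, the assignments $c_a\mapsto a$, $\{a\}\mapsto a$ and $a\mapsto a$ furnish the three bijections, and they are canonical because each object in (a) and in (b) is reconstructed uniquely from the element $a$ (namely, the constant endomorphism of value $a$, respectively the singleton $\{a\}$).

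First I would dispose of (ii)$\Leftrightarrow$(iv), which is immediate: a singleton $\{a\}$ is closed under an $n$-ary fundamental operation $f$ precisely when $f(a,\dots,a)\in\{a\}$, i.e.\ $f(a,\dots,a)=a$, and closure under every $f\in\Cal F$ is exactly the requirement for $\{a\}$ to be a subalgebra. (For a nullary symbol $f$ this reads $f=a$, which is likewise forced.) Next, for (i)$\Leftrightarrow$(iv), recall that $c_a$ is a homomorphism iff, for every $n$-ary $f\in\Cal F$ and all $x_1,\dots,x_n\in A$, one has $c_a(f(x_1,\dots,x_n))=f(c_a(x_1),\dots,c_a(x_n))$; since $c_a$ is constant with value $a$, the left-hand side equals $a$ and the right-hand side equals $f(a,\dots,a)$, so the condition collapses exactly to $a=f(a,\dots,a)$, which is (iv).

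Finally, (iii)$\Leftrightarrow$(iv): clearly (iii) implies (iv), since every fundamental operation is in particular a (term) operation of $A$; conversely, assuming (iv), a straightforward induction on the complexity of terms yields $p(a,\dots,a)=a$ for every term operation $p$, the projections and constant symbols serving as base cases and the inductive step simply substituting $a$ into each argument of $f$. Collecting these equivalences, the three sets (a), (b), (c) are in natural bijection via $a$.

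The argument is essentially bookkeeping, so I do not anticipate a genuine obstacle; the only point demanding care is the induction in (iv)$\Rightarrow$(iii), which confirms that the definition of \emph{totally idempotent} (quantifying over all operations) is no stronger than the corresponding condition on the fundamental operations alone, together with the correct handling of nullary symbols, which force $a$ to coincide with every constant of $A$ simultaneously in all three descriptions.
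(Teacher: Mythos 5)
Your proof is correct: routing all three sets through the common element $a$ and observing that the constant-endomorphism condition, the singleton-subalgebra condition, and total idempotence all collapse to $f(a,\dots,a)=a$ for the fundamental operations (with the induction on terms and the nullary case handled as you describe) is exactly the intended argument. The paper states this proposition without proof, treating it as immediate, so your write-up supplies precisely the bookkeeping the authors left implicit.
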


    The set $E$ of all idempotent endomorphisms of $A$ can be partially ordered setting, for all $e,f\in E$, $e \le f$ if $e(A)\subseteq f(A)$ and, for all $a,a'\in A$, $ f(a)=f(a')\Rightarrow e(a)=e(a')$. The partially ordered set $(E,\le)$ has the identity automorphism $\id_A$ of $A$ as its greatest element. If $A$ has constant endomorphisms, they are all minimal elements of $(E,\le)$. 

    \begin{teo}\label{qqq} Let $A$ be an algebra, $A=B \ltimes\omega$ be a semidirect-product decomposition of $A$, and $e$ the corresponding idempotent endomorphism of $A$. The following conditions are equivalent for a congruence class $K$ of $A$ modulo $\omega$:

        {\rm (a)} $K$ is a subalgebra of $A$.

        {\rm (b)} The element of $K\cap B$ is totally idempotent.

        {\rm (c)} In the partially ordered set $E$ of all idempotent endomorphisms of $E$, there exists a constant endomorphism $e_0\in E$ such that $e_0\le e$.
    \end{teo}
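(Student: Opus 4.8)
The plan is to fix the representative of $K$ and reduce everything to the ``grading'' property recorded in Remark~\ref{ru}. Since $A=B\ltimes\omega$, Theorem~\ref{ciy} guarantees that $B$ meets every congruence class of $\omega$ in exactly one point; I write $\{b\}=K\cap B$, so that $K=[b]_\omega=A_b$ in the notation of Remark~\ref{ru}, and $e(a)=b$ for every $a\in K$.

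For the implication (a)$\Leftrightarrow$(b) I would use the inclusion $p(A_b\times\dots\times A_b)\subseteq A_{p(b,\dots,b)}$, valid for every $n$-ary operation $p$; this is exactly the content of ``$\omega$ is a congruence'' noted in Remark~\ref{ru}. Because distinct congruence classes are disjoint, $p(K^n)\subseteq K$ holds if and only if $A_{p(b,\dots,b)}=A_b$, i.e.\ $p(b,\dots,b)\,\omega\,b$. Now $p(b,\dots,b)\in B$ because $B$ is a subalgebra, and $B$ meets $[b]_\omega$ only in $b$, so $p(b,\dots,b)\,\omega\,b$ is equivalent to $p(b,\dots,b)=b$. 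Letting $p$ range over all operations, $K$ is a subalgebra of $A$ precisely when $p(b,\dots,b)=b$ for every $p$, that is, precisely when $b$ is totally idempotent. This is the only step involving more than formal manipulation, and even it is immediate once the grading inclusion of Remark~\ref{ru} is invoked, so I expect no real difficulty here.

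For (b)$\Leftrightarrow$(c) I would invoke the one-to-one correspondence between constant endomorphisms of $A$ and totally idempotent elements of $A$ established in the preceding Proposition, under which a constant endomorphism has its unique value as a totally idempotent element. The relevant map is the constant endomorphism $e_0$ with value $b$. The key observation is that for \emph{any} constant endomorphism the implication ``$e(a)=e(a')\Rightarrow e_0(a)=e_0(a')$'' is vacuously true, so $e_0\le e$ reduces to $e_0(A)\subseteq e(A)=B$, i.e.\ to the value of $e_0$ lying in $B$. Thus, if (b) holds, the constant endomorphism with value $b$ exists, has image $\{b\}\subseteq B$, hence satisfies $e_0\le e$, and has image contained in $K$, giving (c). Conversely, a constant endomorphism $e_0\le e$ with image contained in $K$ has a totally idempotent value $\overline{a}\in B$ (by the correspondence and by $e_0\le e$); since also $\overline{a}\in K$, we get $\overline{a}\in K\cap B=\{b\}$, whence $b=\overline{a}$ is totally idempotent, which is (b).

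The main obstacle, and really the only subtle point, is that condition (c) as literally written refers only to $e$ and not to the chosen class $K$. Every constant endomorphism $\le e$ has its value in $B$, and $B$ already meets $K$, so (c) pins down the particular class $K$ only if one reads it as requiring $e_0(A)\subseteq K$ (equivalently, that the value of $e_0$ be the representative $b$ of $K$). With that reading the loop closes exactly as above. Without it, (c) becomes independent of $K$ and merely asserts that $B$ possesses a totally idempotent element, which is the class-free shadow of (b) and fails to be equivalent to (a) for an individual $K$ (already for a group with a nontrivial retract, where only the class of the identity is a subalgebra). I would therefore state (c) with the image condition $e_0(A)\subseteq K$ and run the argument exactly as sketched.
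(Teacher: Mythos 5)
Your proof is correct, and where it overlaps with the paper it is essentially the same argument in a different order: the paper proves (a)$\Rightarrow$(b) by restricting $e$ to a constant endomorphism $e|^K_K$ of $K$ and observing that its image $K\cap B$ is a one-element subalgebra, whereas you get (a)$\Leftrightarrow$(b) directly from the grading inclusion $p(A_b\times\dots\times A_b)\subseteq A_{p(b,\dots,b)}$ of Remark~\ref{ru} plus the fact that $B$ meets $[b]_\omega$ only in $b$; the two routes are interchangeable and equally short (the only microscopic point to make explicit in yours is that $p(K^n)$ is nonempty, so $p(K^n)\subseteq K$ genuinely forces $A_{p(b,\dots,b)}=A_b$; this also handles $0$-ary operations). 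Your (b)$\Rightarrow$(c) is literally the paper's. More importantly, the reservation you raise about (c) is genuine and not a matter of taste: condition (c) as printed makes no reference to $K$, and in the paper's own proof of (c)$\Rightarrow$(a) the step ``$k_1\,\omega\,k_0,\dots,k_n\,\omega\,k_0$'' silently assumes that the value $k_0$ of the constant endomorphism $e_0$ lies in $K$, which (c) does not guarantee. Your counterexample is exactly right: for a group $A$ with a nontrivial retract $B$ and $\omega$ the congruence of the normal subgroup $N$, the constant endomorphism with value $1_A$ satisfies $e_0\le e$, so (c) holds for every class, while a coset $K=Ng$ with $g\notin N$ is not a subgroup. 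The equivalence therefore holds only under the additional requirement $e_0(A)\subseteq K$ (equivalently, that the value of $e_0$ be the element of $K\cap B$), which is the reading you adopt and evidently the one intended, since the subsequent Corollary phrases the result as a bijection between totally idempotent elements of $B$ and the classes that are subalgebras. With that reading your argument, and the paper's, both close correctly.
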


    \begin{Proof} If (a) holds, then the idempotent endomorphism $e\colon A\to A$, which associates to each congruence class modulo $\omega$ the unique element of the congruence class which also belongs to $B$, restricts to a constant endomorphism $e|^K_K\colon K\to K$, which associates to every element of $K$ the unique element of $K\cap B$. Since images of homomorphisms are subalgebras, it follows that $K\cap B$ is a subalgebra of $B$, so that its unique element is totally idempotent. This proves (b).

    If (b) holds, then the constant mapping $e_0\colon A\to A$ that sends all elements of $A$ to the totally idempotent element $k_0$ is a constant endomorphism of $A$. Moreover $e_0\le e$ because $e_0(A)=\{k_0\}\subseteq B=e(A)$ and, for all $a,a'\in A$, the implication $ e(a)=e(a')\Rightarrow e_0(a)=e_0(a')=k_0$ holds trivially. Thus (c) holds.

    Assume that (c) holds. Let $e_0$ be a constant endomorphism of $A$ with $e_0\le e$. If $k_0$ is the unique element in the image of $e_0$, then $k_0\in e(A)=B$. Moreover $k_0$ is a totally idempotent element because the image $\{k_0\}$ of the homomorphism $e_0$ is a subalgebra of $A$. Now $K$ is a subalgebra of $A$, because if $f$ is an $n$-ary operation of $A$ and $k_1,\dots,k_n\in K$, then $k_1\,\omega\, k_0,\,\dots,\,k_n\,\omega\, k_0$, so that $f(k_1,\dots,k_n)\,\omega \,f(k_0,\dots,k_0)=k_0 $ because $\omega$ is a congruence. Therefore $f(k_0,\dots,k_0)\in K$, and (a) holds.
    \end{Proof}

    \begin{cor} Let $A$ be an algebra and $A=B \ltimes\omega$ be a semidirect-product decomposition of $A$. Then the canonical bijection $B\to A/\omega$ of Theorem~{\rm\ref{ciy}(d)} induces by restriction a bijection between the set of all the elements of $B$ that are totally idempotent and the set of all congruence classes of $A$ modulo $\omega$ that are subalgebras of~$A$.
    \end{cor}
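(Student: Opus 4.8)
The plan is to deduce this corollary essentially directly from Theorem~\ref{qqq}. First I would recall the two facts about the decomposition $A=B\ltimes\omega$ that set up the bijection: by Theorem~\ref{ciy}(d) the canonical mapping $B\to A/\omega$, $b\mapsto[b]_\omega$, is a bijection, and by Theorem~\ref{ciy}(a) the subalgebra $B$ is a set of representatives for the congruence classes of $A$ modulo $\omega$. Consequently, every congruence class $K$ of $A$ modulo $\omega$ is of the form $K=[b]_\omega$ for exactly one $b\in B$, and this $b$ is the unique element of $K\cap B$.

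Next I would invoke the equivalence (a)$\Leftrightarrow$(b) of Theorem~\ref{qqq}. Fix $b\in B$ and put $K:=[b]_\omega$. Since $b$ is the unique element of $K\cap B$, Theorem~\ref{qqq} says precisely that $K$ is a subalgebra of $A$ if and only if $b$ is totally idempotent. This is the single substantive input, and it has already been proved.

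Finally I would assemble the conclusion. The canonical bijection sends $b\mapsto[b]_\omega$; by the previous paragraph $b$ is totally idempotent exactly when $[b]_\omega$ is a subalgebra of $A$. Hence the bijection carries the set of totally idempotent elements of $B$ onto the set of congruence classes of $A$ modulo $\omega$ that are subalgebras of $A$, and its restriction to these subsets is again a bijection.

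There is no genuine obstacle here: all the work lies in Theorem~\ref{qqq}, and the only thing to check is the identification of the element singled out in condition~(b) of that theorem with the preimage of $K$ under the canonical bijection, which is immediate from $K\cap B=\{b\}$. The corollary is therefore merely a restatement of Theorem~\ref{qqq} transported along the isomorphism $B\cong A/\omega$.
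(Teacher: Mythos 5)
Your proof is correct and is exactly the argument the paper intends: the corollary is stated without proof precisely because it is the immediate consequence of the equivalence (a)$\Leftrightarrow$(b) of Theorem~\ref{qqq}, transported along the canonical bijection $b\mapsto[b]_\omega$ of Theorem~\ref{ciy}(d), and your identification of the element of $K\cap B$ with the preimage of $K$ under that bijection is the only detail that needs checking.
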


    For instance, in the variety of groups, the unique totally idempotent element of any group is its identity. And of all the cosets $gN$ modulo a normal subgroup $N$, the unique coset that is a subgroup is the subgroup $N$.

    \medskip

    %The terminal object of the category $\Cal V$. It corresponds to totally idempotent elements. The homomorphisms are similar to the case of rings and idempotent elements.

   \begin{remark}{\rm The setting described until now in this Section~\ref{3} can be described in an evocative way by the diagram \begin{equation}
    \xymatrix@1{\omega \ar[r] & A \ar[r]^\varphi & {\phantom ,}B, \ar@/^/[l]^\iota}\label{cyul}\end{equation} where $\varphi$ is the surjective homomorphism described in Theorem~\ref{ciy}(c), $\omega$ is the kernel of $\varphi$, $\iota\colon B\hookrightarrow A$ is the inclusion of the subalgebra $B$ into the algebra $A$, $\iota$ is a right inverse of $\varphi$, and $\iota\varphi$ is the corresponding idempotent endomorphism of $A$.

In several cases the congruence $\omega$ on $A$ can be described (can be determined) by one of its congruence classes $K$ that turns out to be a subalgebra of $A$. This is the case of normal subgroups, or of two-sided ideals of a ring (not-necessarily with an identity). On the one hand this allows to replace diagram (\ref{cyul}) with the diagram \begin{equation}
    \xymatrix@1{{K\,} \ar@{^{(}->}[r] & A \ar[r]^\varphi & {\phantom ,}B. \ar@/^/[l]^\iota}\label{7}\end{equation}
On the other hand this also allows us to talk of ``action of the algebra $B$ on the algebra~$K$''. This is possible whenever $K$ is the congruence class modulo $\omega$ of a totally idempotent element.}\end{remark}

\subsection{The example of groups}\label{ss}
Let $\Cal V$ be the varieties of groups (three operations, one binary, one unary, and one $0$-ary). Let $A$ be a group. Every idempotent endomorphism $e$ of $A$ corresponds to a semidirect-product decomposition $A=\omega\rtimes B$, where $B$ is a subgroup of $A$ and $\omega$ a congruence on $A$. In $A$ there is a unique totally idempotent element: the identity $1_A$ of $A$. Since the variety $\Cal V$ of groups is ideal determined, the congruence $\omega$ corresponds to a normal subgroup $N$ of $A$, the congruence class of $1_A$ modulo $\omega$. The corresponding partition of $A$ into pointed subsets is the partition of $A$ in cosets $Na$. More precisely, it is the partition into pointed sets $\Cal{N}=\{\, (Nb,b)\mid b\in B\,\}$. For each of the three $n$-ary operations $p\colon A^n\to A$ on $A$ and each $b_1,\dots,b_n\in B$, we have the restriction $A_{b_1}\times\dots\times A_{b_n}\to A_{p(b_1,\dots,b_n)} $, which is a morphism in the category $\Set_*$. In this case of groups, these restrictions are morphisms of pointed sets $(Nb_1,b_1)\times\dots\times (Nb_n,b_n)\to(Nb_1\dots b_n, b_1\dots b_n)$. But due to the canonical bijections $N\to Nb$, $n\mapsto nb$, these restrictions can be equivalently described as:

(1) Mappings $g_{b_1,b_2}\colon N\times N\to N$. They are defined by the multiplications on the corresponding cosets, so that  $g_{b_1,b_2}(n_1,n_2)=(n_1b_1)(n_2b_2)(b_1b_2)^{-1}=n_1b_1n_2b_1^{-1}$. Notice that, therefore, all these mappings $g_{b_1,b_2}$ do not depend on $b_2$, but only on $b_1$. Moreover, $g_{b_1,1_B}(1_N,-)\colon N\to N$ is the conjugation by $b_1$, written on the left like in Subsection~\ref{2.1} (it is a group homomorphism). Finally notice that $g_{b_1,b_2}(n_1,n_2)=n_1g_{b_1,1_B}(1_N,n_2)$, so that all the mappings $g_{b_1,b_2}$ can be expressed using the only mappings $g_{b_1,1_B}(1_N,-)$ and multiplication in $N$, that is, the mappings $g_{b_1,1_B}(1_N,-)$ are sufficient to parametrize all the mappings $g_{b_1,b_2}$.

(2) The $0$-ary operation corresponds to the element $1_N$ in the pointed set $(N,1_B)$.

(3) The mappings $h_b\colon N\to N$ are defined by the inverse on the corresponding cosets, so that  $h_b(n)=(nb)^{-1}b=b^{-1}n^{-1}b=g_{(b^{-1},1_B)}(1_N,n^{-1})$. Thus all the mappings $h_b$ are also completely determined by the only mappings $g_{b_1,1_B}(1_N,-)$.

Thus, for the special case of groups, all the data $\{g_{(b_1,b_2)}, h_b, 1_N\}$ are completely determined by the group homomorphisms $B\to\Aut_\Gp(N)$, $b\mapsto g_{b,1_B}(1_N,-)$. But notice that the correct notion of ``action'' of an algebra $B$ on an algebra $N$ in the case of one binary operation as is the case of semigroups for instance, is as a collection of morphisms $(A_{b_1},b_1)\times(A_{b_2},b_2)\to (A_{b_1b_2},b_1b_2)$ indexed in $B^2$.

\section{Outer semidirect product}\label{4}

Let $B$ be an algebra in a variety $\Cal V$ of type $\Cal F$. We want to construct an algebra $A$ in $\Cal V$ that is a semidirect product of $B$ and a congruence $\omega$ on $A$. To this end, we take as our initial data the algebra $B$ in the variety and an indexed family $\Cal N=\{\,(A_b,a_b)\mid b \in B\,\}$ of pointed sets, indexed in $B$. Set $$A:=\bigcup_{b\in B} (A_b\times\{b\})\subseteq \left(\bigcup_{b\in B} A_b\right)\times B.$$ For every $n$-ary function symbol $f\in\Cal F$ and every $b_1,\dots, b_n\in B$, let 
$$f_{(b_1,\dots, b_n)}\colon(A_{b_1},a_{b_1})\times\dots\times(A_{b_n},a_{b_n})\to (A_{f(b_1,\dots, b_n)},a_{f(b_1,\dots, b_n)})$$ 
be a morphism in $\Set_*$, that is, a mapping $f_{(b_1,\dots, b_n)}\colon A_{b_1}\times\dots\times A_{b_n}\to A_{f(b_1,\dots, b_n)}$ such that $f_{(b_1,\dots, b_n)}(a_{b_1},\dots,a_{b_n})=a_{f(b_1,\dots, b_n)}$. Then $A$ becomes an algebra of type $\Cal F$ with the  operations defined by $$f((a_{b_1},b_1),\dots, (a_{b_n},b_n))=(f_{(b_1,\dots, b_n)}(a_{b_1},\dots,a_{b_n}), f(b_1,\dots, b_n)).$$ 
The outer semidirect products of $B$ and $\Cal N$ are indexed only by those mappings $f_{(b_1,\dots, b_n)}$ for which the corresponding operations $f$ on $A$ satisfy the identities of $\Cal {V}$. For any such choice of $\Cal N$ and mappings $f_{(b_1,\dots, b_n)}$, we will call the resulting algebra $A$ the {\em (outer) semidirect product} parametrized by $\Cal N$ and the family of mappings $F=\{\,f_{(b_1,\dots, b_{n_f})}\mid f\in\Cal F,\ n_f \ \mbox{\rm the arity of }f,\ b_1,\dots, b_{n_f}\in B\,\}$. It will be denoted by $A=\Cal N\rtimes_F B$. Also, we will denote this assignment of the pointed set $(A_b,a_b)$ to each element $b\in B$ and of the mapping $f_{(b_1,\dots, b_n)}$ to each $n$-ary function symbol $f\in\Cal F$ and each $b_1,\dots, b_n\in B$ in the form $F\colon B\to \Set_*$ because of the similarity of the outer semidirect product we have just defined with a functor into the category $\Set_*$ (see Subsection~\ref{functor} below).

\begin{lem}
Let $A$ be an algebra admitting an inner semidirect-product decomposition $A=\omega\rtimes B$. Then $A$ is isomorphic to the outer semidirect product $\Cal N\rtimes_F B$ where $\Cal N=\{\,([b]_\omega,b)\mid b\in B\,\}$ and $F$ consists of the restrictions of the operations $p$ of $A$ to $[b_1]_\omega\times\dots\times [b_{n_p}]_\omega\to[p(b_1,\dots,b_{n_p})]_\omega$.
\end{lem}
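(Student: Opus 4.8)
The plan is to exhibit an explicit isomorphism from $A$ onto the outer semidirect product $\Cal N\rtimes_F B$, built from the idempotent endomorphism that the decomposition supplies. Let $e\colon A\to A$ be the idempotent endomorphism associated with $A=\omega\rtimes B$ via Theorem~\ref{ciy}, so that $e(a)$ is the unique element of $[a]_\omega\cap B$, with $\ker e=\omega$ and $e(A)=B$. The candidate map will be $\psi\colon A\to\Cal N\rtimes_F B$, $\psi(a)=(a,e(a))$.

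First I would check that the data $(\Cal N,F)$ genuinely define an outer semidirect product in the sense of Section~\ref{4}. The family $\Cal N=\{([b]_\omega,b)\mid b\in B\}$ is a well-defined family of pointed sets because $B$ meets each $\omega$-class in exactly one point (Theorem~\ref{ciy}(a)), namely the distinguished point $b$. For each $n$-ary function symbol $f\in\Cal F$ and all $b_1,\dots,b_n\in B$, the restriction $f_{(b_1,\dots,b_n)}$ of the operation $f$ of $A$ carries $[b_1]_\omega\times\dots\times[b_n]_\omega$ into $[f(b_1,\dots,b_n)]_\omega$ since $\omega$ is a congruence, and it sends the base-point tuple $(b_1,\dots,b_n)$ to $f(b_1,\dots,b_n)$, which is the base point of $[f(b_1,\dots,b_n)]_\omega$ because $B$ is a subalgebra. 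This is precisely the content of Remark~\ref{ru}, so each $f_{(b_1,\dots,b_n)}$ is a morphism in $\Set_*$, as required.

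The heart of the argument is that $\psi$ is an isomorphism. It lands in $\Cal N\rtimes_F B=\bigcup_{b\in B}([b]_\omega\times\{b\})$ because $a\,\omega\,e(a)$ gives $a\in[e(a)]_\omega$, and it is a bijection with inverse the first projection $(x,b)\mapsto x$, since $e(x)=b$ whenever $x\in[b]_\omega$. To see that $\psi$ preserves operations, fix $f\in\Cal F$ of arity $n$ and $a_1,\dots,a_n\in A$, and set $b_i=e(a_i)$. Computing the operation of $\Cal N\rtimes_F B$,
$$
f(\psi(a_1),\dots,\psi(a_n))
=\bigl(f_{(b_1,\dots,b_n)}(a_1,\dots,a_n),\,f(b_1,\dots,b_n)\bigr)
=\bigl(f(a_1,\dots,a_n),\,f(e(a_1),\dots,e(a_n))\bigr),
$$
where the first coordinate simplifies because $f_{(b_1,\dots,b_n)}$ is literally the restriction of $f$ in $A$. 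On the other hand $\psi(f(a_1,\dots,a_n))=\bigl(f(a_1,\dots,a_n),\,e(f(a_1,\dots,a_n))\bigr)$. The first coordinates agree, and the second coordinates agree exactly because $e$ is a homomorphism, i.e.\ $e(f(a_1,\dots,a_n))=f(e(a_1),\dots,e(a_n))$. Hence $\psi$ is an isomorphism of algebras of type $\Cal F$; in particular $\Cal N\rtimes_F B\cong A\in\Cal V$, so it lies in $\Cal V$ and is a legitimate outer semidirect product.

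I expect no serious obstacle once Theorem~\ref{ciy} and Remark~\ref{ru} are available: the only delicate point is the bookkeeping of the two coordinates, where one must recognize that the homomorphism property of $e$ is exactly what forces the second coordinates to coincide. The subtlest conceptual step is noticing that the single map $\psi$ does double duty, both certifying that $\Cal N\rtimes_F B$ belongs to $\Cal V$ (so that it is an outer semidirect product at all) and establishing the claimed isomorphism $A\cong\Cal N\rtimes_F B$.
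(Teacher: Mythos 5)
Your proof is correct and follows essentially the same route as the paper: the paper also uses the map $a\mapsto(a,b)$ with $b$ the unique element of $[a]_\omega\cap B$ (your $\psi(a)=(a,e(a))$) and simply declares the verification straightforward. You have merely written out the details — well-definedness via Remark~\ref{ru} and the homomorphism property via $e(f(a_1,\dots,a_n))=f(e(a_1),\dots,e(a_n))$ — that the paper leaves to the reader.
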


\begin{Proof}
It is straighforward to check that this is a well defined outer semidirect product and that the mapping $a\mapsto(a,b)$, where $b$ is the unique element of $B$ such that $a\in[b]_\omega$, is an isomorphism $A\to\Cal N\rtimes_FB$.
\end{Proof}

An {\em action} of $B$ on $\Cal N$ is therefore an element of 
\begin{equation}\prod_{f\in\Cal F}\left(\prod_{(b_1,\dots,b_n)\in B^n}\hom_{\Set_*}\left((A_{b_1},a_{b_1})\times\dots\times(A_{b_n},a_{b_n}),(A_{f(b_1,\dots, b_n)},a_{f(b_1,\dots, b_n)})\right)\right)\label{cguk}\end{equation} 
for which the corresponding operations $f$ on $A$ satisfy the identities of $\Cal V$. Thus the outer semidirect product is constructed from an algebra $B$, an indexed family $\Cal N=\{\,(A_b,a_b)\mid b \in B\,\}$ of pointed sets indexed in $B$, and a family $F$ of morphisms $f_{(b_1,\dots, b_n)}\colon(A_{b_1},a_{b_1})\times\dots\times(A_{b_n},a_{b_n})\to (A_{f(b_1,\dots, b_n)},a_{f(b_1,\dots, b_n)})$ 
 in $\Set_*$, one for each $n$-ary function symbol $f\in\Cal F$ and each $n$-tuple $(b_1,\dots, b_n)\in B^n$.

\subsection{Outer semidirect products can be extended to functors}\label{functor}

In Category Theory, semidirect product of groups has been generalized to the so-called {\em Grothendieck construction}. The Grothendieck construction can be applied to any  functor from any small category to the category $\Cat$ of small categories. 

As a matter of fact, there is a similarity between the definition of functor and our definition of outer semidirect product of arbitrary algebras. In order to define a functor $F$ from a category $\Cal C$ to a category $\Cal D$, we must:

\noindent(1) associate with each object 
$X$ in $\Cal C$ an object $F(X)$ in $\Cal D$;

\noindent(2) associate with each morphism $f\colon X\to Y$ in $\Cal C$ a morphism  $F(f)\colon F(X)\to F(Y)$ in $\Cal D$;

\noindent(3) and two very natural conditions must hold.

To define an outer semidirect product, we need an algebra $B$ in a variety $\Cal V$ of type $\Cal F$, and then we must:

    \noindent(1) associate with each element $b\in B$ a pointed set $(A_b,a_b)$;
    
    \noindent(2) associate with each operation $f\in\Cal F$, $n_f$-ary operation say, and each $n_f$-tuple $(b_1,\dots,b_{n_f})\in B^{n_f}$, a morphism $f_{(b_1,\dots, b_{n_f})}\colon(A_{b_1},a_{b_1})\times\dots\times(A_{b_{n_f}},a_{b_{n_f}})\to (A_{f(b_1,\dots, b_{n_f})},a_{f(b_1,\dots, b_{n_f})})$
in $\Set_*$,

\noindent in such a way that if we define $A:=\dot\bigcup_{b\in B}A_b=\{\,(x,b)\mid b\in B,\ x\in A_b\,\}$ and the operations $f\colon A^{n_f}\to A$ on $A $ via $$f((a_{b_1},b_1),\dots, (a_{b_n},b_n))=(f_{(b_1,\dots, b_n)}(a_{b_1},\dots,a_{b_n}), f(b_1,\dots, b_n)),$$ then 

\noindent(3) the identities defining the variety $\Cal V$ must hold for the algebra $A$.

\medskip

We all know that the first definition in MacLane's ``Categories for the Working Mathematician'' \cite[p.~7]{ML} is that of metagraph: A {\em metagraph} consists of {\em objects} $a,b,c,\dots,$ {\em arrows} $f,g,h$,\dots, and two operations: (1) {Domain}, which assigns to each arrow $f$ an object $a=\dom f$; (2) {Codomain}, which assigns to each arrow $f$ an object $b=\cod f$. In order to describe here now algebras and their semidirect product, we need the more general notion of metahypergraph: A {\em metahypergraph} consists of {\em objects} $a,b,c,\dots,$ {\em arrows} $f,g,h$,\dots, and two operations: (1) {Domain}, which assigns to each arrow $f$ an $n_f$-tuple $(a_1,\dots,a_{n_f})=\dom f$ of objects, where $n_f$ is a non-negative integer that depends on $f$; (2) {Codomain}, which assigns to each arrow $f$ an object $b=\cod f$. Then an outer semidirect product is a ``functor'' from a metaphypergraph to the category $\Set_*$. More precisely, every algebra $A$ with operations $f_\gamma$ of arity $n_\gamma$ can be viewed as a metaphypergraph with objects the elements of $A$, and one arrow $f_{\gamma, \, (a_1,\dots,a_{n_\gamma})}$ for each operation $f_\gamma$ of $A$ and each $n_\gamma$-tuple $(a_1,\dots,a_{n_\gamma})\in A^{n_\gamma}$. The domain of $f_{\gamma, \, (a_1,\dots,a_{n_\gamma})}$ is the $n_\gamma$-tuple $(a_1,\dots,a_{n_\gamma})$, and its codomain is the element $f_{\gamma}(a_1,\dots,a_{n_\gamma})$ of $A$.

\smallskip

Now the metagraph constructed in the previous paragraph (we will denote it by $A$, the same symbol used for the algebra) can be extended to a category $\Cal C_A$ which we will call the {\em enveloping category} of the metahypergraph $A$. The objects of $\Cal C_A$ are all $n$-tuples $(a_1,\dots,a_n)$ of elements of the algebra $A$ for an arbitrary integer $n\ge0$. A morphism $(a_1,\dots,a_n)\to (b_1,\dots,b_m)$ is any $m$-tuple $(p_1,\dots,p_m)$ of $n$-ary terms \cite[Definitions~10.1 and~10.2]{BS} such that $p_j(a_1,\dots,a_n)=b_j$ for every $j=1,2,\dots,m$. Composition of morphisms is defined by 
$$(q_1,\dots,q_r)\circ(p_1,\dots,p_m)=(q_1(p_1,\dots,p_m), q_2(p_1,\dots,p_m),\dots, q_r(p_1,\dots,p_m)).$$ 

\begin{teo}
Any outer semidirect product $F\colon A\to\Set_*$ can be extended to a functor $G\colon \Cal C_A\to\Set_*$ by defining $$G(a_1,\dots,a_n)=F(a_1)\times\dots\times F(a_n)$$ and $$\begin{array}{l} G(p_1,\dots,p_m)=(F(p_1),\dots, F(p_m))= \\ \qquad =\prod_{j=1}^m\left(F(p_j)\colon F(a_1)\times\dots\times F(a_n)\to F(p_j(a_1,\dots,a_n))\right).\end{array}$$    
\end{teo}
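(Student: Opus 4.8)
The statement refers to $F(p_j)$ for an arbitrary $n$-ary term $p_j$, whereas the outer semidirect product $F$ is a priori given only on the arrows of the metahypergraph $A$, i.e.\ on the basic operations. So the first task is to extend $F$ to all terms, and the plan is to do this through the semidirect-product algebra itself. Write $S:=\Cal N\rtimes_F A$ for the algebra with underlying set $\dot\bigcup_{b\in A}A_b$ and the operations of the outer semidirect product defined above, and let $\pi\colon S\to A$, $(x,b)\mapsto b$, be the projection. By the very definition of the operations of $S$, $\pi$ is a homomorphism, and the map $\sigma\colon A\to S$, $b\mapsto(a_b,b)$, is a homomorphic section of $\pi$ (this uses that each $f_{(b_1,\dots,b_n)}$ is a morphism of pointed sets, hence sends the tuple of basepoints to the basepoint). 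For an $n$-ary term $p$ and a tuple $(a_1,\dots,a_n)\in A^n$ I would then define $F(p)$ to be the restriction of the term operation $p^S$ to the fibre $\prod_i\pi^{-1}(a_i)=F(a_1)\times\dots\times F(a_n)$.

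Next I would check that $F(p)$ is indeed a morphism $F(a_1)\times\dots\times F(a_n)\to F(p(a_1,\dots,a_n))$ in $\Set_*$. Since $\pi$ is a homomorphism, $\pi(p^S(s_1,\dots,s_n))=p(\pi s_1,\dots,\pi s_n)=p(a_1,\dots,a_n)$ whenever $\pi s_i=a_i$, so $p^S$ carries the chosen fibre into $\pi^{-1}(p(a_1,\dots,a_n))=F(p(a_1,\dots,a_n))$; and since $\sigma$ is a homomorphism, $p^S(\sigma a_1,\dots,\sigma a_n)=\sigma(p(a_1,\dots,a_n))$, which says precisely that $F(p)$ sends the tuple of basepoints to the basepoint. (Equivalently, $F(p)$ can be defined by induction on the structure of $p$, composing projections with the given maps $f_{(\dots)}$, which makes pointedness manifest.) With $F$ now defined on all terms, the formulas in the statement define $G$ on objects, $G(a_1,\dots,a_n)=\prod_iF(a_i)$ being a product of pointed sets (the empty product being the one-point pointed set), and on morphisms, $G(p_1,\dots,p_m)$ being the product morphism $(F(p_1),\dots,F(p_m))$ into $\prod_jF(p_j(a_1,\dots,a_n))=\prod_jF(b_j)=G(b_1,\dots,b_m)$, a well-defined morphism of $\Set_*$ because each of its components is.

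It then remains to verify functoriality. Preservation of identities is immediate: the identity of $(a_1,\dots,a_n)$ in $\Cal C_A$ is the tuple of variables $(x_1,\dots,x_n)$, and $F(x_i)$ is the $i$-th projection of $\prod_kF(a_k)$, so $G(x_1,\dots,x_n)$ is the identity. For composition, given $(p_1,\dots,p_m)\colon(a_1,\dots,a_n)\to(b_1,\dots,b_m)$ and $(q_1,\dots,q_r)\colon(b_1,\dots,b_m)\to(c_1,\dots,c_r)$, comparing components reduces the required equality $G((q_1,\dots,q_r)\circ(p_1,\dots,p_m))=G(q_1,\dots,q_r)\circ G(p_1,\dots,p_m)$ to the single identity
\[ F\bigl(q_j(p_1,\dots,p_m)\bigr)=F(q_j)\circ\bigl(F(p_1),\dots,F(p_m)\bigr)\qquad(j=1,\dots,r). \]

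I expect this last identity to be the crux, but with the definition above it comes almost for free: it is exactly the restriction to the relevant fibres of the substitution law $\bigl(q_j(p_1,\dots,p_m)\bigr)^S=q_j^S\circ(p_1^S,\dots,p_m^S)$ for term operations, which holds in every algebra. (Were one to define $F$ on terms inductively instead, this identity would have to be proved by an induction on $q_j$, and that is where the genuine work would sit.) Finally I would note that $G$ really extends $F$: for a basic $n$-ary operation $f$, viewed as the term $f(x_1,\dots,x_n)$, the map $F(f(x_1,\dots,x_n))$ is the restriction of $f^S$ to the fibre over $(a_1,\dots,a_n)$, which is precisely the originally given morphism $f_{(a_1,\dots,a_n)}$.
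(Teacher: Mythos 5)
Your argument is correct, and in fact the paper states this theorem without giving any proof at all; your write-up supplies exactly the intended one. Extending $F$ to arbitrary terms by restricting the term operations of $S=\Cal N\rtimes_F A$ to the fibres of the projection $\pi\colon S\to A$, using the section $b\mapsto(a_b,b)$ to get pointedness, and reducing functoriality to the substitution law for term operations is precisely the reading forced by the theorem's own notation $F(p_j)\colon F(a_1)\times\dots\times F(a_n)\to F(p_j(a_1,\dots,a_n))$, so there is nothing to add.
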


\subsection{The category $\Cal S_B$ of all outer semidirect products of an algebra $B$}

Let $B$ be an algebra in a variety $\Cal V$ of type $\Cal F$. It is possible to construct a category $\Cal S_B$ as follows.

(a) The objects of $\Cal S_B$ are all semidirect products $F\colon B\to \Set_*$, where $$F(b)=(A_b,a_b)$$ for every $b\in B$, 
$$f^F_{(b_1,\dots, b_n)}\colon(A_{b_1},a_{b_1})\times\dots\times(A_{b_n},a_{b_n})\to (A_{f(b_1,\dots, b_n)},a_{f(b_1,\dots, b_n)})$$ for every $n_f$-ary function symbol $f\in\Cal F$ and every $(b_1,\dots, b_n)\in B^n$, and $A=\dot{\bigcup}_{b\in B} (A_b\times\{b\})$ is an algebra in $\Cal V$.

(b) A morphism $\varphi\colon F\to F'$, where $F,F'\colon B\to\Set_*$ are semidirect products of $B$, consists of a morphism $\varphi_b\colon F(b)\to F'(b)$ in $\Set_*$ for every $b\in B$, in such a way that for every $f\in \Cal F$ and every $n_f$-uple $(b_1,\dots, b_n)\in B^n$ the diagram
\begin{equation}
\xymatrix{(A_{b_1},a_{b_1})\times\dots\times(A_{b_n},a_{b_n})\ \ar[r]^{f^F_{(b_1,\dots, b_n)}} \ar[d]_{\varphi_{b_1}\times\dots\times\varphi_{b_n}} & \ (A_{f(b_1,\dots, b_n)},a_{f(b_1,\dots, b_n)}) \ar[d]_{\varphi_{f(b_1,\dots,b_n)}} \\ 
(A'_{b_1},a'_{b_1})\times\dots\times(A'_{b_n},a'_{b_n})\ \ar[r]_{f^{F'}_{(b_1,\dots, b_n)}} & \ (A'_{f(b_1,\dots, b_n)},a'_{f(b_1,\dots, b_n)}) }\label{square}\end{equation} commutes.

\medskip

Also recall the definition of {\em category $\Cal P_B$ of pointed objects over $B$} \cite[p.~28]{[1]}:

(a) Its objects are all triples $(A,\alpha,\beta)$, where $A\in\Cal V$, $\alpha\colon B\to A$, $\beta\colon A\to B$ are homomorphisms in $\Cal V$ and $\beta\alpha=1_B$.

(b) Morphisms $\gamma\colon (A,\alpha,\beta)\to (A',\alpha',\beta')$ are all morphisms $\gamma\colon A\to A'$ in $\Cal V$ for which the diagram $\xymatrix{ & A\ar[rd]^{\beta}\ar[dd]^{\gamma} & \\
B \ar[rd]_{\alpha'}\ar[ru]^{\alpha} & & B \\
& A'\ar[ru]_{\beta'} &}$ commutes
(i.e.,
such that $\gamma\alpha=\alpha'$ and $\beta'\gamma=\beta$).

(c) Composition $\gamma'\circ\gamma$ of morphisms is composition of mappings.

It is straightforward to check that:

\begin{teo}
    The two categories $\Cal S_B$ and $\Cal P_B$ are equivalent. If $(A,\alpha,\beta)$ is an object of $\Cal P_B$, the corresponding object $F\colon B\to\Set_*$ is defined by $F(b)=(\beta^{-1}(b),\alpha(b))$ for every $b\in B$ and each\begin{equation*}\begin{split} f^F_{(b_1,\dots, b_n)}&\colon (\beta^{-1}(b_1),\alpha(b_1))\times\dots\times (\beta^{-1}(b_{n_f}),\alpha(b_{n_f}))\to \\ &\qquad \qquad\to (\beta^{-1}(f(b_1,\dots, b_n)),\alpha(f(b_1,\dots, b_n)))\end{split}\end{equation*} is the restriction of the operation $f^A\colon A^n\to A$ in $A$.
\end{teo}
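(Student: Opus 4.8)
The plan is to establish an equivalence of categories between $\Cal S_B$ and $\Cal P_B$ by constructing functors in both directions and checking they are quasi-inverse to each other. I would first describe the functor $\Phi\colon\Cal P_B\to\Cal S_B$ explicitly. Given an object $(A,\alpha,\beta)$ of $\Cal P_B$, the condition $\beta\alpha=1_B$ makes $\alpha$ a section of $\beta$, so $\alpha(B)$ is a subalgebra of $A$ that is a set of representatives for the congruence $\omega:=\ker\beta$; in other words $A=\alpha(B)\ltimes\omega$ in the sense of Theorem~\ref{ciy}, with $\alpha\beta$ the corresponding idempotent endomorphism. This is exactly the inner-to-outer passage of the Lemma preceding this subsection: I would set $F(b)=(\beta^{-1}(b),\alpha(b))$ and define each $f^F_{(b_1,\dots,b_n)}$ as the restriction of the operation $f^A$, which lands in $\beta^{-1}(f(b_1,\dots,b_n))$ precisely because $\beta$ is a homomorphism, and preserves the basepoints because $\alpha$ is. On morphisms, a map $\gamma\colon(A,\alpha,\beta)\to(A',\alpha',\beta')$ restricts, for each $b\in B$, to a pointed map $\varphi_b\colon(\beta^{-1}(b),\alpha(b))\to((\beta')^{-1}(b),\alpha'(b))$ since $\beta'\gamma=\beta$ forces $\gamma(\beta^{-1}(b))\subseteq(\beta')^{-1}(b)$ and $\gamma\alpha=\alpha'$ sends basepoint to basepoint; commutativity of the square~(\ref{square}) is then just the statement that $\gamma$ commutes with the operations.

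Next I would build the functor $\Psi\colon\Cal S_B\to\Cal P_B$ in the reverse direction, reusing the construction given before the statement. Given $F\colon B\to\Set_*$ with $F(b)=(A_b,a_b)$, form the algebra $A=\dot\bigcup_{b\in B}(A_b\times\{b\})$ with the operations $f((a_{b_1},b_1),\dots)=(f^F_{(b_1,\dots,b_n)}(a_{b_1},\dots),f(b_1,\dots,b_n))$, which lies in $\Cal V$ by hypothesis. Define $\beta\colon A\to B$ by $\beta(x,b)=b$ and $\alpha\colon B\to A$ by $\alpha(b)=(a_b,b)$. One checks directly that $\beta$ is a homomorphism because the second coordinate of every operation is computed in $B$, that $\alpha$ is a homomorphism because the basepoint compatibility $f^F_{(b_1,\dots,b_n)}(a_{b_1},\dots,a_{b_n})=a_{f(b_1,\dots,b_n)}$ is exactly what a morphism in $\Set_*$ guarantees, and that $\beta\alpha=1_B$ trivially. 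On morphisms, a family $\{\varphi_b\}$ satisfying~(\ref{square}) assembles into a single map $\gamma\colon A\to A'$ by $\gamma(x,b)=(\varphi_b(x),b)$, and the commuting square is precisely the homomorphism condition for $\gamma$, while $\gamma\alpha=\alpha'$ and $\beta'\gamma=\beta$ hold by construction.

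Finally I would verify that $\Phi$ and $\Psi$ are mutually quasi-inverse. The composite $\Psi\Phi$ applied to $(A,\alpha,\beta)$ reproduces, up to a canonical isomorphism, the original algebra: the natural map $A\to\dot\bigcup_b(\beta^{-1}(b)\times\{b\})$ sending $a\mapsto(a,\beta(a))$ is a bijection (each $a$ lies in exactly one fibre $\beta^{-1}(\beta(a))$) and an isomorphism of algebras commuting with the structure maps $\alpha,\beta$; this gives the natural isomorphism $\Psi\Phi\cong\id_{\Cal P_B}$. Conversely $\Phi\Psi$ sends $F$ to the outer semidirect product whose pointed set at $b$ is $(\beta^{-1}(b),\alpha(b))=(A_b\times\{b\},(a_b,b))$, canonically isomorphic in $\Set_*$ to $(A_b,a_b)$ via the projection, and these isomorphisms are natural in $F$ and compatible with the operation morphisms, yielding $\Phi\Psi\cong\id_{\Cal S_B}$. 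I expect the only genuinely fiddly point to be the naturality and basepoint bookkeeping in these last two isomorphisms; everything else reduces to unwinding the definitions and observing that ``$\beta$ is a homomorphism'' corresponds to the codomain bookkeeping of the metahypergraph while ``$\alpha$ is a homomorphism'' corresponds to basepoint preservation in $\Set_*$. Because the problem is purely definitional once the dictionary is set up, this is exactly the sort of statement the authors flag as ``straightforward to check.''
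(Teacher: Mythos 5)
Your proposal is correct and follows exactly the route the paper intends: the theorem's statement already prescribes the functor $\Cal P_B\to\Cal S_B$, the construction preceding the theorem gives the reverse direction, and the paper leaves the verification as ``straightforward to check,'' which is precisely what you have carried out. Your identification of ``$\beta$ is a homomorphism'' with the codomain bookkeeping and ``$\alpha$ is a homomorphism'' with basepoint preservation, and the canonical isomorphisms $a\mapsto(a,\beta(a))$ and $(A_b\times\{b\},(a_b,b))\cong(A_b,a_b)$, are the right details to supply.
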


Notice that the commutativity of all the squares (\ref{square}) exactly means that the corresponding mapping $A=\dot{\bigcup}_{b\in B} F(b) \to A'=\dot{\bigcup}_{b\in B} F'(b) $ is a homomorphism in $\Cal V$.

\subsection{Action of an algebra $B$ on another algebra $K$.} At the beginning of this  Section~\ref{4}, we have defined $A$ as $\bigcup_{b\in B} (A_b\times\{b\})$, so that there is an embedding $A\hookrightarrow \left(\bigcup_{b\in B} A_b\right)\times B.$ A particular case of this construction of semidirect product is that in which the indexed family $\Cal N=\{\,(A_b,a_b)\mid b \in B\,\}$ is such that there exist a set $C$ and an element $c\in C$ such that $A_b=C\times \{b\}$ and $a_b=(c,b)$ for every $b\in B$ (i.e., the semidirect product $F\colon B\to\Set_*$ constantly assigns the pointed set $(C,c)$ with every element $b\in B$). In this case one has a bijection $A\to C\times B$. For example, suppose that $B$ has a totally idempotent element $\overline{b}$, and let $K$ be another algebra in $\Cal V$ with a  totally idempotent element $\overline{k}\in K$. Then any corresponding outer semidirect product consists in determining all structures of algebra in $\Cal V$ over the cartesian product $K\times B$ for which the mapping $K\times B\to K\times B$, $(k,b)\mapsto (\overline{k},b)$, is an idempotent endomorphism, and the two mappings $K\to K\times B$, $k\mapsto (k,\overline{b})$ and $B\to K\times B$, $b\mapsto (\overline{k},b)$ are two algebra embeddings. 

\medskip

The best case is the following. Assume that we have a pointed variety $\Cal V$ 
 (i.e., a variety with exactly one constant) and the subset consisting of only that constant is a subalgebra. Equivalently, in the variety there is a constant $0$ such that, for every $n$-ary operation $f\in\Cal F$, 
$f(0,0,\dots,0)=0$. In such a variety the singletons are the zero objects. For these algebras there is a Galois correspondence between congruence classes of zero (ideals, normal subalgebras) and congruences.

Let $B$ be an algebra in such a variety $\Cal V$ of type $\Cal F$. Take as our initial data the algebra $B$ in the variety and an indexed family $\{\,A_b\mid b\in B\,\}$ of nonempty sets. With the axiom of choice, fix an element $a_b\in A_b$ for each $b\in B$, and construct the disjoint union $A:=\dot{\bigcup}_{b\in B}A_b$. 

Given any two algebras $B$ and $K$ in $\Cal V$, an {\em action} of $B$ on $K$ is any indexed family $m_f$ ($f\in\Cal F$) of mappings $m_f\colon B^{n_f}\to\hom_{\Set_*}(K^{n_f}, K)$ such that the operations
$$f((k_1,b_1),\dots,(k_{n_f},b_{n_f}))=(m_f(b_1,\dots,b_{n_f})(k_1,\dots,k_{n_f}),f(b_1,\dots,b_{n_f}))$$ give an algebra structure on the cartesian product $B\times K$ in such a way that its subalgebras $B\times\{0_K\}$ and $\{0_B\}\times K$ turn out to be canonically isomorphic to the algebras $B$ and $K$, respectively.

This is the case for $\Omega$-groups, which were introduced by Higgins in \cite{Hig}. A variety of $\Omega$-groups is a variety which is
pointed and
has amongst its operations and identities those of the variety of
groups. Examples of $\Omega$-groups are abelian groups, non-unital rings, 
commutative algebras, modules, and Lie algebras. Also see \cite[Example~(2)]{JMT}.

\subsection{When is a semidirect product a direct product.}
Let now $\Cal V$ be a variety of type $\Cal F$, and fix two algebras $B$ and $K$ in $\Cal V$ with two totally idempotent elements $\overline{b}\in B$ and $\overline{k}\in K$. Construct a semidirect product $F_K\colon B\to \Set_*$ with $F_K(b)=(K,\overline{k})$ for every $b\in B$, so that $A:=K\times B$, and $F_K$ associates with any $n$-ary function symbol $f\in\Cal F$ and every $n$-tuple $(b_1,\dots,b_{n})\in B^{n}$, a morphism $f_{(b_1,\dots, b_{n})}\colon K^n\to K$
in $\Set_*$, i.e. a mapping with $f_{(b_1,\dots, b_{n})}(\overline{k},\dots,\overline{k})=\overline{k}$.
On $A=K\times B$ the semidirect-product operations are defined setting $$f((k_1,b_1),\dots,(k_n,b_n))=(f_{(b_1,\dots, b_{n})}(k_1,\dots,k_n),f^B(b_1,\dots, b_{n}).$$ We leave to the reader to verify that:

\begin{prop}
    The semidirect-product algebra $A=K\times B$ is canonically the direct product of $K$ and $B$ if and only if $f_{(b_1,\dots, b_{n})}=f^K$, $f^K$ the $n$-ary operation on $K$, for every $n$-ary function symbol $f\in\Cal F$ and every $(b_1,\dots,b_{n})\in B^{n}$.
\end{prop}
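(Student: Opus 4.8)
The plan is to unwind the meaning of ``canonically the direct product'' and then compare the two algebra structures on the common underlying set $K\times B$ coordinatewise. By the direct product of $K$ and $B$ I mean the algebra on $K\times B$ whose operations are computed componentwise, $f^{K\times B}((k_1,b_1),\dots,(k_n,b_n))=(f^K(k_1,\dots,k_n),f^B(b_1,\dots,b_n))$; saying that the semidirect product $A$ is \emph{canonically} the direct product means precisely that the identity map on the set $K\times B$ is an isomorphism from $A$ to this direct product, i.e.\ that the two systems of operations coincide as operations on the same set.

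First I would observe that, by the very definition of the semidirect-product operations, the semidirect-product operation and the direct-product operation always agree on the second coordinate, both returning $f^B(b_1,\dots,b_n)$. Hence the two algebra structures on $K\times B$ are equal if and only if their first coordinates agree for every $n$-ary $f\in\Cal F$ and every argument tuple, that is, if and only if $f_{(b_1,\dots,b_n)}(k_1,\dots,k_n)=f^K(k_1,\dots,k_n)$ for all $b_i\in B$ and all $k_i\in K$.

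For the forward implication, assuming $A$ is canonically the direct product, I would fix $f$ and a tuple $(b_1,\dots,b_n)$ and compare first coordinates for arbitrary $(k_1,\dots,k_n)\in K^n$; since the $k_i$ range over all of $K$, this yields $f_{(b_1,\dots,b_n)}=f^K$ as mappings $K^n\to K$. Conversely, substituting $f_{(b_1,\dots,b_n)}=f^K$ into the defining formula for the semidirect-product operations turns them into the componentwise operations, so $A$ is the direct product. One should also note that $f^K$ is an admissible choice for $f_{(b_1,\dots,b_n)}$, because $\overline{k}$ is totally idempotent and hence $f^K(\overline{k},\dots,\overline{k})=\overline{k}$, which is exactly the pointed-set condition $f_{(b_1,\dots,b_n)}(\overline{k},\dots,\overline{k})=\overline{k}$ required of each $f_{(b_1,\dots,b_n)}$.

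There is no real obstacle here: the only point requiring care is fixing the precise meaning of ``canonically'', after which the equivalence is a coordinatewise comparison of the two operation systems, exactly parallel to the group and digroup cases (Corollary~\ref{2.5}), where the analogous condition is that the actions are all trivial.
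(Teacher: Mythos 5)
Your proof is correct and follows exactly the argument the paper intends: the proposition is explicitly left to the reader, and your coordinatewise comparison (second coordinates always agree, so equality of the two structures reduces to $f_{(b_1,\dots,b_n)}=f^K$ for all $f$ and all tuples) mirrors the paper's own proof of the digroup case in Corollary~\ref{2.5}. Your added observation that $f^K$ is an admissible choice of $f_{(b_1,\dots,b_n)}$ because $\overline{k}$ is totally idempotent is a worthwhile detail that the statement itself tacitly assumes.
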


\section{Examples}

\subsection{The example of groups} The elements of (\ref{cguk}) that satisfy the identities of the variety $\Cal V$ parametrize the outer semidirect products of $B$ and $\Cal N$ exactly in the same way as for groups the semidirect products of two groups $B$ and $N$ are parametrized by the group homomorphisms $B\to\Aut_\Gp(N)$.

Let us see this case more in detail. In the case of semidirect product of groups, $B$ is a group (an algebra with one binary operation, one unary operation, and one $0$-ary operation), and the unique totally idempotent element of $B$ is the identity $1_B$ of $B$. Also, the variety of groups is ideal determined, so that the congruence $\omega$ will correspond to a normal subgroup $N$ of $A$, and the corresponding partition of $A$ is the partition into cosets. Thus if, for the indexed family $\Cal N=\{\,(A_b,a_b)\mid b \in B\,\}$ of pointed sets, the sets $A_b$ are not all equipotent, then no family of functions $f_{(b_1,\dots, b_n)}\colon(A_{b_1},a_{b_1})\times\dots\times(A_{b_n},a_{b_n})\to (A_{f(b_1,\dots, b_n)},a_{f(b_1,\dots, b_n)})$ will satisfy the axioms of groups. Hence we can exclude that case immediately and suppose, without loss of generality, that $A_b=N\times \{b\}$ and $a_b=(1_N,b)$ for some fixed group $N$.
Then $A=\bigcup_{b\in B} (A_b\times\{b\})=\bigcup_{b\in B} (N\times\{(b,b)\}), $ so that $A$ is the cartesian product of $N$ and the diagonal $\Delta(B\times B)$ of $B\times B$. The diagonal $\Delta(B\times B)$ is clearly in one-to-one correspondence with $B$ itself, so that we can suppose that $A=N\times B$, the cartesian product of $N$ and $B$.

For the binary function symbol $p\in\Cal F$ denoting the product and every pair $(b_1,b_2)\in B^2$, we must now fix a morphism
$$p_{(b_1,b_2)}\colon(A_{b_1},a_{b_1})\times (A_{b_2},a_{b_2})\to (A_{b_1b_2},a_{b_1b_2})$$ 
 in $\Set_*$. In our case, this means that we have  a mapping $$p_{(b_1,b_2)}\colon (N\times\{b_1\})\times (N\times\{b_2\})\to (N\times\{b_1b_2\})$$ such that $p_{(b_1,b_2)}((1_N,b_1),(1_N,b_2))=(1_N,b_1b_2)$. 
 Due to the canonical bijections $N\to N\times
 \{b\}$, we can equivalently fix mappings $g_{(b_1,b_2)}\colon N\times N\to N$ such that $g_{(b_1,b_2)}(1_N,1_N)=1_N$. Similarly, for the unary operation, we must fix mappings $h_b\colon N\to N$ such that $h_b(1_N)=1_N$. For the $0$-ary operation we take the constant $1_N$ in $N\cong N\times\{1_B\}$. Now we must impose that these data $\{g_{(b_1,b_2)},h_b,1_N\}$ define on the disjoint union $A=N\times B$ a group structure, i.e., that the three axioms of groups are satisfied. As it is easy to see, they correspond to the three conditions

 \noindent(1) $g_{(b_1b_2,b_3)}(g_{(b_1,b_2)}(n_1,n_2),n_3)=g_{(b_1,b_2b_3)}(n_1,g_{(b_2,b_3)}(n_2,n_3))$ for every $b_1,b_2,b_3\in B, \ n_1,n_2,n_3\in N$;
 
 \noindent(2) $g_{(1_B,b)}(1_N,n)=n$ for every $b\in B, \ n\in N$;

 \noindent(3) $g_{(b^{-1},b)}(h(n),n)=1_N$ for every $b\in B, \ n\in N$.
 
 Then the disjoint union $A=N\times B$ becomes a group with respect to the multiplication defined by $$(n_1,b_1)(n_2,b_2)=(g_{(b_1,b_2)}(n_1,n_2), b_1 b_2).$$ 
 
There is a bijection between the set of all group morphisms $B\to\Aut_\Gp(N)$ and the set of all data $\{g_{(b_1,b_2)}, h_b, 1_N\}$. It associates to $\{g_{(b_1,b_2)}, h_b, 1_N\}$ the group morphism $\gamma\colon B\to\Aut_\Gp(N)$, $\gamma\colon  b\mapsto \gamma_b:=g_{(b,1_B)}(1_N,-)$, and to any group morphism $\gamma\colon B\to\Aut_\Gp(N)$ the mappings $g_{(b_1,b_2)}\colon N\times N\to N$ and $h_b\colon N\to N$ defined by $g_{(b_1,b_2)}(n_1,n_2)=n_1\gamma_{b_1}(n_2)$ and $h_b(n)=\gamma_{b^{-1}}(n^{-1})$.

In order to see that this is a bijection, suppose that $\gamma\colon B\to\Aut_\Gp(N)$ is a group homomorphism. Construct the semidirect product $G:=B\ltimes_\gamma N$. Then $G$ is a group, so that we can apply to it what we saw in Example~\ref{ss}, i.e., that the restrictions of the operations on $G$ satisfy the conditions (1), (2), (3) above.

\subsection{The example of digroups} The case of digroups is pretty similar to that of groups. Let $(B,*,\circ)$ and $(N,*,\circ)$ be two digroups (it would be more careful to denote digroups as $(B,*,\circ,{}^{-*},{}^{-\circ},1_B)$, where the operations $*$ and $\circ$ are binary, the operations ${}^{-*}$ and ${}^{-\circ}$ are unary, and $1_B$ is $0$-ary). Similarly to our construction of semidirect product in this paper (but now writing conjugation on the right like in Subsection~\ref{dig}), we construct a digroup structure on the cartesian product $B\times N$, making use of mappings $g^+_{(b_1,b_2)}\colon N\times N\to N$, $g^\circ_{(b_1,b_2)}\colon N\times N\to N$, $h^+_b\colon N\to N$, $h^\circ_b\colon N\to N$, and the constant $(1_B,1_N)\in B\times N$, imposing that they define a digroup structure on $B\times N$.

Notice that on $B\times N$ we now have that $(b_1,n_1)\circ(b_2,n_2)=b_1\circ n_1\circ b_2\circ n_2=b_1\circ b_2\circ b_2^{-1}\circ n_1\circ b_2\circ n_2=(b_1\circ b_2)\circ (\phi_{\circ\,{b_2}}(n_1)\circ n_2)=(b_1\circ b_2,\phi_{\circ\,{b_2}}(n_1)\circ n_2)$, so that \begin{equation}
g^\circ_{(b_1,b_2)}(n_1,n_2)=\phi_{\circ\,{b_2}}(n_1)\circ n_2.\label{(1)}\end{equation} (This is exactly the equality (\ref{circ}).) Again we find that $g^\circ_{(b_1,b_2)}(n_1,n_2)$ does not depend on $b_1$. Moreover, $g^\circ_{(1_B,b)}(n,1_N)=\phi_{\circ\,{b}}(n)$, so that \begin{equation}
    \phi_{\circ\,b}=g^\circ_{(1_B,b)}(-,1_N),\label{quad1}
\end{equation} which yields a group antihomomorphism $(B,\circ)\to\Aut_\Gp(B,\circ)$.

For the operation $+$ on $B\times N$ we have from (\ref{+}) that \begin{equation}g^+_{(b_1,b_2)}(n_1,n_2)=(\Lambda_{b_1* b_2})^{-1}(\phi_{* b_2}(\Lambda_{b_1}(n_1))* \Lambda_{b_2}(n_2)).\label{(2)}
\end{equation}
In particular, $g^+_{(b,1_B)}(1_N,n)=(\Lambda_{b})^{-1}(\Lambda_{b}(1_N)* \Lambda_{1_B}(n))=(\Lambda_{b})^{-1}(n),$ so that $(\Lambda_{b})^{-1}=g^+_{(b,1_B)}(1_N,-)$, or, equivalently, \begin{equation}
    \Lambda_{b}=(g^+_{(b,1_B)}(1_N,-))^{-1}.\label{quad2}
\end{equation}

Finally, $g^+_{(1_B,b)}(n,1_N)=(\Lambda_{b})^{-1}(\phi_{*b}(\Lambda_{1_B}(n))* \Lambda_{b}(1_N))=(\Lambda_{b})^{-1}(\phi_{*b}(n)),$ so that \begin{equation}
   \phi_{*b}(n)=\Lambda_{b}(g^+_{(1_B,b)}(n,1_N)).\label{quad3}
\end{equation} The three formulas (\ref{quad1}), (\ref{quad2}) and (\ref{quad3}) allow determine $\phi_\circ$, $\Lambda$ and $\phi_*$ as functions of the mappings $g^+$ and $g^\circ$. 

Conversely, assume that $\phi_\circ$, $\phi_*$ and  $\Lambda$ are given. From (\ref{(1)}), it is possible to determine the mappings $g^+$, and from (\ref{(2)}) it is possible to determine the mappings $g^\circ$. We leave to the reader to check that $h^\circ_b(n)=\phi_{\circ\,b^{-\circ}}(n^{-\circ})$ and $h^+_b(n)=(\Lambda_{b^{-*}})^{-1}(\phi_{*\,b^{-*}}((\Lambda_b(n))^{-*}))$.


\begin{thebibliography}{99}

\bibitem {meltem} M. Altun-\"Ozarslan and A. Facchini, {\em The Krull-Schmidt-Remak-Azumaya Theorem for $G$-groups}, in ``Rings, modules and codes'', A. Leroy, Ch. Lomp, S. L\'opez-Permouth, F. Oggier Eds., Contemporary Math. {\bf 727}, Amer. Math. Soc., Providence, RI, 2019, pp. 25--38.

\bibitem{FacHeaps} M. J. Arroyo Paniagua and A. Facchini, {\em Heaps and trusses}, to appear in the proceedings of the conference ``Rings and Factorizations 2023", July 10 – 14, 2023, Graz, Austria, M. Brešar, A. Geroldinger, B. Olberding and D. Smertnig Eds., Springer, also available at arXiv:2308.00527.

\bibitem{Bac} D. Bachiller, {\em Extensions, matched products, and simple braces,} J. Pure
Appl. Algebra {\bf 227} (2018), 1670--1691.

\bibitem{[1]} F Borceux, {\em A survey of semi-abelian categories}, in: ``Galois Theory,
Hopf Algebras, and Semiabelian Categories'', G. Janelidze, B. Pareigis and W. Tholen Eds., Fields Inst. Commun. {\bf 43}, Amer. Math. Soc., Providence, RI, 2004.

\bibitem{Bou}  N. Bourbaki, ``\'El\'ements de math\'ematique'', Fasc. XXVI, Groupes et alg\`ebres de Lie. Chapitre I: Alg\`ebres de Lie, Seconde \'edition, Hermann, Paris, 1971.

\bibitem{BFP} D. Bourn, A. Facchini and M. Pompili, {\em Aspects of the category SKB of skew braces}, Comm. Algebra (2022), published online 5 Dec 2022.

\bibitem{BS}  S. Burris and H. P. Sankappanavar, ``A course in universal algebra'', Graduate Texts in Math. {\bf 78}, Springer-Verlag, New York-Berlin, 1981. Millennium Edition available at https://math.hawaii.edu/~ralph/Classes/619/univ-algebra.pdf

%\bibitem {[BG1982]} 
%A. Buys and G. K. Gerber, {\em The prime radical for $\Omega$-groups}, Comm. Algebra {\bf 10} (1982), 1089--1099.
%
%\bibitem {[BG1985]} 
%A. Buys and G. K. Gerber, {\em Prime and $k$-prime ideals in $\Omega$-groups}, Quaestiones Math. {\bf 8}, 1, 1985, 15-32

\bibitem{FFJ} A. Facchini, C. A. Finocchiaro and G. Janelidze, {\em Abstractly constructed prime spectra}, Algebra Universalis {\bf 83}(1) (2022), Paper No 8, 38 pp.

\bibitem {Leila} A. Facchini and L. Heidari Zadeh, {\em Algebras with a bilinear form, and idempotent endomorphisms}, J. Algebra  {\bf 628} (2023), 434--451.

\bibitem{FacSkew} A. Facchini and M. Pompili, {\em Semidirect products of skew braces}, to appear in Bull. Belg. Math. Soc. Simon Stevin (2023), also available at: arXiv:2301.09133.


\bibitem {Gray} J. W. Gray, {\em 
The existence and construction of Lax limits},
Cahiers top. géom. diff. cat. {\bf 21} (3) (1980),  277--304.

 \bibitem{Hig} P.~J.~Higgins, {\em Groups with multiple operators}, Proc. London Math. Soc. (3) {\bf 6} (1956), 366--416.
 
\bibitem{JMT} 
G. Janelidze, L. M\'arki and W. Tholen, {\em 
Semi-abelian categories},  J. Pure Appl. Algebra {\bf 168} (2002), no. 2--3, 367--386. 

%\bibitem{Jed} P. Jedlička, {\em Semidirect products of lattices},
%Algebra Universalis {\bf 57} (2007), no. 3, 259--272.

\bibitem{ML} S. Mac Lane, ``Categories for the Working Mathematician'', Second
Edition, Springer, New York, 1998.

\bibitem{MLM} S.  Mac Lane and I. Moerdijk, ``Sheaves in geometry and logic: a first introduction to topos theory'', (2., corr. print ed.), Springer, New York,  1994.

\bibitem {SE} {\em Are semi-direct products categorical (co)limits?}, in MathOverflow, 
https://mathoverflow.net/questions/96078/are-semi-direct-products-categorical-colimits

\bibitem{Th} R. W. Thomason, {\em Homotopy colimits in the category of small categories}, Math. Proc. Cambridge Phil. Soc. {\bf 85} (1979), 91--109. 
\end{thebibliography}
\end{document}